\numberwithin{equation}{section}
\newcommand{\eps}{\epsilon}
\newcommand{\R}{{\mathbb R}}
\newcommand{\C}{{\mathbb C}}
\newcommand{\N}{{\mathbb N}}
\newcommand{\caH}{{\mathcal H}}
\newcommand{\caA}{{\mathcal A}}
\def\be{\begin{equation}}
\def\ee{\end{equation}}
\def\beq{\begin{eqnarray}}
\def\eeq{\end{eqnarray}}
\def\beqs{\begin{eqnarray*}}
\def\eeqs{\end{eqnarray*}}
\def\ea{\end{array}}
\def\ea{\end{array}}
\def\bt{\begin{thm}}
\def\et{\end{thm}}
\def\br{\begin{rk}}
\def\er{\end{rk}}
\def\bc{\begin{cor}}
\def\ec{\end{cor}}
\def\bl{\begin{lem}}
\def\el{\end{lem}}
\newcommand{\rot}{\mathop{\rm curl}\nolimits}
\newcommand{\curl}{\mathop{\rm curl}\nolimits}
\newtheorem{thm}{Theorem}[section]
\newtheorem{cor}[thm]{Corollary}
\newtheorem{exo}[thm]{Example}
\newtheorem{lem}[thm]{Lemma}
\newtheorem{rk}[thm]{Remark}
\newenvironment{proof}[1][Proof]{\textbf{#1.} }{\ \rule{0.5em}{0.5em}}
\newcommand{\ddiv}{{\rm div}}
\newcommand{\bn}{{\bf n}}
\author{Serge Nicaise\footnote{Universit\'e Polytechnique Hauts-de-France, LAMAV,
FR CNRS 2956,
F-59313 - Valenciennes Cedex 9, France,
Serge.Nicaise@uphf.fr},
Cristina Pignotti\footnote{DISIM, Universit\`a degli Studi dell'Aquila,
Via Vetoio, Loc. Coppito, 67010 L'Aquila, Italy, pignotti@univaq.it}}
\begin{document}
\title{Asymptotic behavior of dispersive electromagnetic waves in bounded domains}

  \maketitle

\begin{abstract}
  We analyze the    stability of Maxwell equations  in   bounded domains taking into account    electric and magnetization effects. Well-posedness of the  model is obtained by means of semigroup theory.
A {\em passitivity} assumption guarantees the boundedness of the associated semigroup.
Further  the exponential or polynomial decay of the energy is proved
under suitable  sufficient conditions.
Finally, several illustrative examples are presented.
\end{abstract}

\noindent{\bf  AMS (MOS) subject classification} 35L50,  93D20

\noindent{\bf Key Words} Dispersive media, stabilization

\section{Introduction}

In this paper we analyze the   stability of Maxwell equations  with a general class of dispersion law
in a bounded domain $\Omega$
  of $\R^3$   with a Lipschitz boundary
$\Gamma$. More precisely,  the  Maxwell equations  in $\Omega$ are given by
\begin{equation}\label{Max} \left\{
\begin{tabular}{lllll}
& $ D_t -\rot H= 0$ in
$Q:= \Omega \times (0,+\infty)$,\vspace{2mm}
\\
& $ B_t +\rot E=0 $ in $Q$,\vspace{2mm}
\end{tabular}
\right.
\end{equation}
where $E$ and $H$ are respectively the electric and magnetic fields, while $D$ and $B$ are respectively the electric and magnetic flux densities.
But in case of electric and magnetization effects, these last ones take the form
\begin{eqnarray}\label{defD}
D(x,t)=\varepsilon(x) E(x,t)+P(x,t),\\
B(x,t)=\mu(x) H(x,t)+M(x,t),\label{defB}
\end{eqnarray}
where $\varepsilon$ (resp. $\mu$) is the permittivity (resp. permeability) of the medium, while
$P$ (resp. $M$) is the retarded electric polarization (resp. magnetization)
that in most applications (see \cite{Kristensson},\cite[Chapter 11]{Sihlova}, \cite{Cassieretall:17}) are of integral form
\begin{eqnarray}\label{defP}
P(x,t)=\int_{0}^t \nu_E(t-s, x) E(x,s)\,ds,\\
M(x,t)=\int_{0}^t \nu_H(t-s, x) H(x,s)\,ds,\label{defM}
\end{eqnarray}
where $\nu_E(t, x)$ (resp. $\nu_H(t, x)$) is the electric (resp. magnetic) susceptibility kernel. Some particular models (corresponding to particular kernels), like Debye, Lorentz or Drude models, can be reduced to a system coupling Maxwell's equations to a finite number of differential equations, see \cite{nic:scl2012}. In such a case semigroup theory can be applied to obtain existence and decay behavior of the solutions. Our goal is to analyze the general system
\eqref{Max} supplemented with the electric boundary conditions
\be\label{electricbc}
E\times \bn=0, \ H\cdot \bn=0 \hbox{ on } \Gamma=\partial \Omega,
\ee
 and initial conditions
 \be\label{IC}
E(\cdot, 0)=E_0, \ H(\cdot, 0)=H_0 \hbox{ in }  \Omega,
\ee
 and find sufficient conditions that guarantee exponential or polynomial decay (at infinity) of the solutions.

In \cite{Ioannidisetall_12} existence and uniqueness of solutions for problem  \eqref{Max}-\eqref{IC} are studied by transforming the system in a Volterra integral equation. We use here a different approach based on semigroup theory as in \cite{ContiGattiPata08,DaneseGeredeliPata15,GiorgiNasoPata:05,Munoz-Naso-Vuk:04}.

In
\cite{Becacheetall:18} the propagation of waves in unbounded dispersive media is studied by using the so-called Perfectly Matched Layers technique in order to realise artificial absorbing conditions. For dispersive isotropic Maxwell equations necessary and sufficient conditions for stability of the PML are given.

In this paper we restrict ourselves to the case when  the permittivity and the permeability
are positive constants, while   the kernels are real valued and do not depend on the space variable, namely we assume that  $\nu_E(t, x)=\nu_E(t)$ and $\nu_H(t, x)=\nu_H(t)$, this already  corresponds to a large class of physical examples, see for instance \cite{Sihlova,Kristensson}. We further assume that
$\nu_E,\nu_H \in K$, where
$K$ is the  set of kernels $\nu\in C^2([0,\infty))$,
that  satisfy
\be\label{asymptoicbehavior of derivatives}
\lim_{t\to \infty} \nu'(t)=0,
\ee
  and that there exists two positive constants
$C$ and $\delta$ (depending on $\nu$) such that
\be\label{hyponu''}
|\nu''(t)|\leq C e^{-\delta t},  \forall t\geq 0.
\ee

Again these assumptions cover a large class of physical models, see section \ref{sexamples} for some illustrative examples.

For shortness, we define the function $w$ by
\[
w(t)=Ce^{-\delta t}, \forall t\geq 0.
\]

The paper is organized as follows. In section 2 we study the well-posedness of the  model in an appropriate Hilbert setting by means of semigroup theory.
In section 3 we show, under the {\em passitivity} assumption (see \eqref{passitivityassump} below), that the semigroup associated to the model is bounded.
Sections 4 and 5 are devoted to the exponential or polynomial decay of the energy
under appropriate  sufficient conditions.
Finally, in section 6 we give several illustrative examples.

Let us finish this introduction with some notation used in
the paper:

\noindent The $L^2(\Omega)$-inner product (resp. norm)
will be denoted by  $(\cdot,\cdot)$ (resp. $\|\cdot\|$).   The usual
norm and semi-norm of $H^{s}(\Omega)$ ($s\geq 0$) are denoted by
$\|\cdot\|_{s,\Omega}$ and $|\cdot|_{s,\Omega}$, respectively.
For $s=0$ we drop the index $s$. By $a\lesssim b$, we mean that there exists a constant $C>0$ independent of $a$, $b$ and the time $t$, such that  $a\leq Cb$.

\section{Well-posedness result}

Even if existence result for problem \eqref{Max}-\eqref{IC} can be obtained using  Volterra integral equation method (see for instance \cite{Ioannidisetall_12}), we here prefer to use a first order past history framework (see \cite{Munoz-Naso-Vuk:04,GiorgiNasoPata:05,ContiGattiPata08} for second order framework and \cite{DaneseGeredeliPata15}
for first order one)
in order to formulate our system into a semigroup context (useful for the stability analysis).
First we notice that using the expressions
\eqref{defD} to \eqref{defM} into \eqref{Max}, we obtain the integro-differential system

\begin{equation}\label{Maxintdif} \left\{
\begin{tabular}{lllll}
& $ \varepsilon E_t+\nu_E(0) E+\int_{0}^t \nu'_E(t-s) E(\cdot, s)\,ds-\rot H= 0$ in
$Q$,\vspace{2mm}
\\
& $ \mu H_t+\nu_H(0) H+\int_{0}^t \nu'_H(t-s) H(\cdot, s)\,ds +\rot E=0 $ in $Q$.\vspace{2mm}
\end{tabular}
\right.
\end{equation}

Assuming for the moment that the solution $(E,H)$ of \eqref{Maxintdif} with boundary conditions \eqref{electricbc}
 and initial conditions \eqref{IC} exists, then   for all $(t, s)\in [0,\infty)\times (0,\infty)$  we introduce the summed past histories
\beq\label{eq:etaE}
\eta_E^t(\cdot, s)=\int_0^{\min\{s,t\}}E(\cdot, t-y)\,dy,
\\
\eta_H^t(\cdot, s)=\int_0^{\min\{s,t\}}H(\cdot, t-y)\,dy,\label{eq:etaH}
\eeq
that respectively satisfy the transport equation
\beq\label{transpE}
\partial_t\eta_E^t(\cdot, s)=-\partial_s\eta_E^t(\cdot, s)+E(\cdot, t),
\\
\label{transpH}
\partial_t\eta_H^t(\cdot, s)=-\partial_s\eta_H^t(\cdot, s)+H(\cdot, t),
\eeq
the boundary condition
\be\label{bceta}
\lim_{s\to 0}\eta_E^t(\cdot, s)=\lim_{s\to 0}\eta_H^t(\cdot, s)=0,
\ee
and the initial condition
\be\label{iceta}
\eta_E^0(\cdot, s)= \eta_H^0(\cdot, s)=0.
\ee
Since formal integration by parts
yields the identities
\beqs
\int_0^t \nu'_E(t-s) E(\cdot, s)\,ds=-\int_0^\infty \nu''_E(s) \eta^t_E(\cdot, s)\,ds,
\\
\int_0^t \nu'_H(t-s) H(\cdot, s)\,ds=-\int_0^\infty \nu''_H(s) \eta^t_H(\cdot, s)\,ds,
\eeqs
system \eqref{Maxintdif}  is (formally) equivalent to
\begin{equation}\label{Maxintdifequiv} \left\{
\begin{tabular}{lllll}
& $ \varepsilon E_t+\nu_E(0) E-\int_{0}^\infty \nu''_E(s) \eta^t_E(\cdot, s)\,ds-\rot H= 0$ in
$Q$,\vspace{2mm}
\\
& $ \mu H_t+\nu_H(0) H-\int_{0}^\infty  \nu''_H(s) \eta^t_H((\cdot, s)\,ds +\rot E=0 $ in $Q$,\vspace{2mm}
\end{tabular}
\right.
\end{equation}

All together by setting
\[
U=\left(
\begin{array}{llll}
E\\
H\\
\eta_E\\
\eta_H
\end{array}
\right),
\]
we obtain the Cauchy problem \begin{equation}\label{pbfirstorder}
\left\{\begin{array}{ll}U_t=\mathcal{A} U,\\U(0)=U_0,\end{array}\right.
\end{equation}
where
\be\label{defA}
\mathcal{A}\left(
\begin{array}{lll}
E\\
H\\
\eta_E\\
\eta_H
\end{array}
\right)=\left(
\begin{array}{lll}
\varepsilon^{-1}(-\nu_E(0) E+\int_{0}^\infty  \nu''_E(s) \eta_E(\cdot, s)\,ds+\rot H)\\
\mu^{-1}(-\nu_H(0) H+\int_{0}^\infty  \nu''_H(s) \eta_H(\cdot, s)\,ds -\rot E)\\
-\partial_s\eta_E(\cdot, s)+E\\
-\partial_s\eta_H(\cdot, s)+H
\end{array}
\right),
\ee
and
\[
U_0=(E_0, H_0,0,0)^\top.\]

The existence of a solution to (\ref{pbfirstorder}) is obtained by using semigroup theory    in the appropriate Hilbert setting
described here below:
First we introduce the Hilbert spaces
\beqs\label{2.1}  J(\Omega)&&=\{\chi \in L^2(\Omega
)^3\vert
\ddiv  \chi =0\},\\
\label{2.2}  \hat J(\Omega )&&=\{\chi \in J(\Omega)\vert
 \chi \cdot \nu =0 \hbox{ on } \Gamma \},\eeqs
 and recall that for an Hilbert space $X$ with inner product $(\cdot,\cdot)_X$
 and induced norm $\|\cdot\|_X$,  $L^2_w((0,\infty); X)$
 is the Hilbert space made of functions $\eta$ defined on $(0,\infty)$ with values in $X$
 such that
 \[
 \int_0^\infty \|\eta(s)\|_X^2 w(s)\,ds<\infty,
 \]
 with the natural inner product
  \[
 \int_0^\infty (\eta(s), \eta'(s))_X w(s)\,ds, \ \forall \eta,\eta'\in L^2_w((0,\infty); X).
 \]

Now we introduce the Hilbert space
$$
\caH=J(\Omega)\times \hat J(\Omega)\times L^2_w((0,\infty);J(\Omega))\times  L^2_w((0,\infty);\hat J(\Omega)),
$$
with the inner product
\beqs
&&((E,H,\eta_E, \eta_H)^\top,(E',H',\eta_E',\eta_H')^\top)_\caH:=\int_\Omega (\varepsilon E\cdot \bar E'+\mu H\cdot \bar H')\,dx
\\&&\hspace{1cm}+
\int_0^\infty \int_\Omega (\eta_E(x,s)\cdot \bar \eta'_E(x,s)+\eta_H(x,s)\cdot \bar \eta'_H(x,s))\,dx\,
 w(s)\,ds,
\eeqs
for all $(E,H,\eta_E, \eta_H)^\top,(E',H',\eta_E',\eta_H')^\top\in \caH.$

We then define the operator $\mathcal{A}$ as follows:
\beq\label{defDA}
&&\quad \mathcal{D}(\mathcal{A})=\left\{(E,H,\eta_E, \eta_H)^\top\!\in \mathcal{H}\vert
\rot E, \rot H\! \in L^2(\Omega
)^3, \, E\times n= 0 \!\hbox{ on } \Gamma,\right.\\
\nonumber
&&\hspace{2cm}
\partial_s \eta_E \in L^2_w((0,\infty);J(\Omega)), \partial_s \eta_H\in L^2_w((0,\infty);\hat J(\Omega))
\\
\nonumber
&&\left.\hspace{2cm}
\hbox{ and } \eta_E(0)=\eta_H(0)=0
\right\},
\eeq
and for all $U=(E,H,\eta_E, \eta_H)^\top\in \mathcal{D}(\mathcal{A})$,   $\mathcal{A}U$ is given by
(\ref{defA}).

We now check that $\mathcal{A}$ generates a $C_0$-semigroup   on $\caH$.
\bt\label{texistence}
The operator $\mathcal{A}$ defined by   \eqref{defA} with domain  \eqref{defDA}  generates a $C_0$-semigroup
 $(T(t))_{t\geq 0}$ on $\caH$.
Therefore for all $U_0\in \caH$,
 problem \eqref{pbfirstorder} has
a weak  solution $U\in C([0,\infty), \caH)$ given by $U(t) =T(t) U_0$, for all $t\geq 0$. If moreover $ U_0 \in D(\mathcal{A}^k)$, with $k\in \N^*$,   problem \eqref{pbfirstorder} has
a strong   solution $U\in C([0,\infty),D(\mathcal{A}^k))\cap C^1([0,\infty),
D(\mathcal{A}^{k-1}))$.
\et
\begin{proof}
It suffices to show that $\mathcal{A}-\kappa I$ is a maximal dissipative operator
for some $\kappa\geq 0;$
then by Lumer-Phillips' theorem it generates a
$C_0$-semigroup of contractions   on $\caH$
and consequently $\mathcal{A}$ generates a $C_0$-semigroup  on $\caH$.

Let us first show the  dissipativeness. Let $U=(E,H,\eta_E, \eta_H)^\top\in D(\mathcal{A})$ be fixed. Then by the definition of $\mathcal{A}$, we have
\beqs
&&(\mathcal{A}U, U)_\caH=
\int_\Omega \left ((-\nu_E(0) E+\int_{0}^\infty  \nu''_E(s)  \eta_E(\cdot, s)\,ds+\rot H) \cdot  \bar E
\right.
\\
&&\hspace{1cm}+
\left.
(-\nu_H(0) H+\int_{0}^\infty  \nu''_H(s) \eta_H(\cdot, s)\,ds -\rot E)  \cdot \bar H\right ) \,dx
  \\
&&\hspace{1cm}+\int_0^\infty \int_{\Omega} \left((-\partial_s\eta_E(\cdot, s)+E)\cdot \bar \eta_E
+(-\partial_s\eta_H(\cdot, s)+H)\cdot \bar \eta_H\right)
\,dx \,w(s) \,ds.
\eeqs
Note that by the density of $\mathcal{D}(\Omega)$ into
$\{E\in H(\rot, \Omega): E\times \bn=0$ on $\Gamma\}$, the next Green's formula holds
\be\label{green}
\int_\Omega \left (\rot  E \cdot  \bar H
- E\cdot \rot \bar H\right ) \,dx=0;
\ee
furthermore by integration by parts we have
\[
\begin{array}{l}
\displaystyle{\int_0^\infty \int_{\Omega}  \partial_s\eta_E(\cdot, s)\cdot \bar \eta_E\,dx
w(s) \,ds=}\\ \hspace{1 cm} \displaystyle{
-\int_0^\infty \int_{\Omega}  \eta_E(\cdot, s)\cdot   \partial_s\bar \eta_E\,dx
-\int_0^\infty \int_{\Omega}  |\eta_E(\cdot, s)|^2\,dx
w'(s) \,ds.}
\end{array}
\]
Using these identities we find
\beqs
\Re (\mathcal{A}U, U)_\caH
&=&-\int_\Omega  (\nu_E(0) |E|^2+\nu_H(0) |H|^2)\,dx
\\
&+&
\Re\int_\Omega \left (\int_{0}^\infty  \nu''_E(s)  \eta_E(\cdot, s)\,ds \cdot  \bar E
+\int_{0}^\infty  \nu''_H(s) \eta_H(\cdot, s)\,ds \cdot \bar H\right ) \,dx
  \\
&+&\frac{1}{2}\int_0^\infty \int_{\Omega}  (|\eta_E(\cdot, s)|^2+(|\eta_H(\cdot, s)|^2) \,dx
w'(s) \,ds
\\
&+&\Re \int_0^\infty \int_{\Omega} \left(E \cdot \bar \eta_E+H\cdot \bar \eta_H\right)
\,dx \,w(s) \,ds.
\eeqs
As $w'(s) \leq 0$, we deduce that
\beqs
\Re (\mathcal{A}U, U)_\caH
&\leq &-\int_\Omega  (\nu_E(0) |E|^2+\nu_H(0) |H|^2)\,dx
\\
&+&
\Re\int_\Omega \left (\int_{0}^\infty  \nu''_E(s)  \eta_E(\cdot, s)\,ds \cdot  \bar E
+\int_{0}^\infty  \nu''_H(s) \eta_H(\cdot, s)\,ds \cdot \bar H\right ) \,dx
\\
&+&\Re \int_0^\infty \int_{\Omega} \left(E \cdot \bar \eta_E+H\cdot \bar \eta_H\right)
\,dx \,w(s) \,ds.
\eeqs

Now using the assumption \eqref{hyponu''}, the definition of $w$ and Cauchy-Scharz's inequality, we find
that there exists a positive constant $\kappa$ such that
\beqs\label{Adissip}
\Re (\mathcal{A}U, U)_\caH
&\leq \kappa & \|U\|_\caH^2.
\eeqs
This shows that  $\mathcal{A}-\kappa I$ is   dissipative.

Let us go on with the maximality. Let $\lambda>0$ be fixed.
For $(F,G,R,S)^\top\in \caH$, we look for $U=(E,H,\eta_E, \eta_H)^\top\in D(\mathcal{A})$ such that
\be\label{max}
(\lambda I-\caA) U=(F,G,R,S)^\top.
\ee
According to  (\ref{defA}) this is equivalent to
\beq
\eps\lambda E+\nu_E(0) E-\int_{0}^\infty  \nu''_E(s) \eta_E(\cdot, s)\,ds-\rot H=\eps F,\label{max1}\\
\mu \lambda H+\nu_H(0) E-\int_{0}^\infty  \nu''_H(s) \eta_H(\cdot, s)\,ds+\rot E= \mu G,\label{max2}\\
\lambda \eta_E+\partial_s\eta_E(\cdot, s)-E=R,\label{max3}\\
\lambda \eta_H+\partial_s\eta_H(\cdot, s)-H=S.\label{max4}
\eeq

Assume for the moment that $U$ exists.
Then the two last   equations allow to eliminate $\eta_E$ and $\eta_H$
since they are equivalent to
\beq
 \eta_E(s)=\frac{1-e^{-\lambda s}}{\lambda} E+\int_0^se^{-\lambda (s-y)} R(y)\,dy,\label{max3equiv}\\
  \eta_H(s)=\frac{1-e^{-\lambda s}}{\lambda} H+\int_0^se^{-\lambda (s-y)} S(y)\,dy.\label{max4equiv}
\eeq
Thus  inserting these expressions
in \eqref{max1} and  \eqref{max2}, we find that
\beq
&&\eps\lambda E+\left(\nu_E(0) -\frac{1}{\lambda} \int_{0}^\infty  \nu''_E(s) (1-e^{-\lambda s})
\,ds\right) E-\rot H=\eps F+r(\lambda),\label{max1bis}\\
&&\mu \lambda H+\left(\nu_H(0) -\frac{1}{\lambda} \int_{0}^\infty  \nu''_H(s) (1-e^{-\lambda s})
\,ds\right) H+\rot E= \mu G+s(\lambda),\label{max2bis}
\eeq
where
\beq\label{defr}
r(\lambda)&=&\int_{0}^\infty \nu''_E(s) \int_0^se^{-\lambda (s-y)} R(y)\,dy\,ds,\\
s(\lambda)&=&\int_{0}^\infty \nu''_H(s) \int_0^se^{-\lambda (s-y)} S(y)\,dy\,ds,
\label{defs}
\eeq
that have the regularity $r\in J(\Omega)$ and $s\in \hat J(\Omega)$.
But two integrations by parts allow to show that (see section \ref{appendix})
\[
\nu_E(0) -\frac{1}{\lambda} \int_{0}^\infty  \nu''_E(s) (1-e^{-\lambda s})
\,ds=\nu_E(0)+\frac{1}{\lambda}(\nu_E'(0)+\mathcal{L} \nu''_E(\lambda))=
\lambda \mathcal{L} \nu_E(\lambda),
\]
where we recall that
 $\mathcal{L} \nu_E$ is the Laplace transform of $\nu_E$, see \eqref{lapTransform} below.
Hence the previous identities
 \eqref{max1bis} and  \eqref{max2bis}  may be equivalently written as
\beq
\lambda \left(\eps + \mathcal{L} \nu_E(\lambda) \right) E-\rot H=\eps F+r(\lambda),\label{max1ter}\\
\lambda \left( \mu + \mathcal{L} \nu_H(\lambda)\right) H+\rot E= \mu G+s(\lambda).\label{max2ter}
\eeq
Owing to \eqref{lapT7},
for $\lambda$ large enough, we will have
\[
\eps + \mathcal{L} \nu_E(\lambda) >0  \hbox{ as well as  } \mu + \mathcal{L} \nu_H(\lambda)>0.
\]
Therefore for  $\lambda$ large enough the system \eqref{max1ter}-\eqref{max2ter} enters in a standard framework (see for instance
\cite[Lemma 3.1]{nic:00}) and a unique solution $(E, H)$ exists with the regularity
\beqs
E\in X_N(\Omega)=\{U\in J(\Omega): \curl U\in L^2(\Omega)^3 \hbox{ and } U\times n=0 \hbox{ on } \Gamma\},
\\
H\in X_T(\Omega)=\{U\in \hat J(\Omega): \curl U\in L^2(\Omega)^3 \},
\eeqs
because $\eps F+r$ (resp. $\mu G+s$) belongs to $J(\Omega)$ (resp. $\hat J(\Omega)$).

The surjectivity of $\lambda I-\caA$ for $\lambda$ large enough  finally holds because
once $E$ and $H$ are given, we obtain $\eta_E$ and $\eta_H$ with the help of
\eqref{max3equiv} and \eqref{max4equiv} respectively
and easily check their right requested regularity.
\end{proof}

\section{Boundedness of the semigroup}

In order to apply standard results on the decay of semigroups (see Lemmas \ref{ab}, \ref{lemraoexp}
and  \ref{lemrao} below),  the first step is to show that the semigroup
 $(T(t))_{t\geq 0}$ generated by
 $\mathcal{A}$ is bounded. This property is based on the passitivity assumption (or equivalently the assumption that the material is passive, see \cite[Definition 2.5]{Cassieretall:17} and \cite[(2.15)]{Nguyen-Vinoles:18}), that says that
 (see \eqref{LapT10})
 \be\label{passitivityassump}
 \Re \left(i\omega {\mathcal L}\nu_E(i\omega)\right)\geq 0,\
  \Re \left(i\omega  {\mathcal L}\nu_H(i\omega)\right)\geq 0, \ \forall \omega\in \R.
 \ee
 Note that this property is equivalent to
 \be\label{passitivityassumpequiv}
 \omega  \Im  {\mathcal L}\nu_E(i\omega)\leq 0,\
  \omega  \Im     {\mathcal L}\nu_H(i\omega)\leq 0,\  \forall \omega\in \R.
 \ee

\begin{lem}\label{l:bdsg}  Under the additional assumption \eqref{passitivityassump}, there exists a positive constant $M$
such that
\be\label{bdsg}
\|T(t)\|\leq M, \ \forall t\geq 0.
\ee
\end{lem}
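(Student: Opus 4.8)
The plan is to step out of the abstract semigroup picture for a moment and work with the integro-differential formulation \eqref{Maxintdifequiv}, deriving a conservation-type identity for the physical energy of $(E,H)$ and showing that the polarization and magnetization terms are dissipative precisely because of \eqref{passitivityassump}; the history components are then controlled a posteriori. Since $D(\mathcal A)$ is dense in $\caH$ and each $T(t)$ is a bounded operator, it suffices to prove \eqref{bdsg} for $U_0=(E_0,H_0,\eta_{E,0},\eta_{H,0})^\top\in D(\mathcal A)$ with a constant $M$ independent of $t$ and $U_0$. For such data $U(t)=T(t)U_0=(E(t),H(t),\eta_E^t,\eta_H^t)^\top$ is a strong solution, $U(t)\in D(\mathcal A)$ for all $t$, and it solves \eqref{Maxintdifequiv}; integrating the transport equations \eqref{transpE}--\eqref{transpH} along characteristics, now with initial datum $\eta_E^0=\eta_{E,0}$, $\eta_H^0=\eta_{H,0}$ instead of \eqref{iceta}, gives the explicit representation $\eta_E^t(\cdot,s)=\int_0^{\min\{s,t\}}E(\cdot,t-y)\,dy+\mathbf 1_{\{s>t\}}\eta_{E,0}(\cdot,s-t)$ and the analogue for $\eta_H^t$. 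Because $w$ decays exponentially, this representation shows that $\|\eta_E^t\|^2_{L^2_w}+\|\eta_H^t\|^2_{L^2_w}\lesssim\sup_{0\le\tau\le t}(\|E(\tau)\|^2+\|H(\tau)\|^2)+\|U_0\|_\caH^2$, so the whole estimate reduces to a bound on $\|E(t)\|^2+\|H(t)\|^2$ uniform in $t$.

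To obtain that bound I would test the first (resp. second) equation of \eqref{Maxintdifequiv} with $\varepsilon\bar E$ (resp. $\mu\bar H$), add, and take real parts; the curl terms cancel by the Green formula \eqref{green}. Using Fubini and the fact that $\nu_E'(t)\to0$, one rewrites $\int_0^\infty\nu_E''(s)\eta_E^t(s)\,ds=-\int_0^t\nu_E'(t-s)E(s)\,ds+\rho_E(t)$ with $\rho_E(t):=\int_t^\infty\nu_E''(s)\eta_{E,0}(s-t)\,ds$, and similarly for $H$. Since $\nu_E(0)E(t)+\int_0^t\nu_E'(t-s)E(s)\,ds=\partial_t P(t)$ with $P=\nu_E\ast E$, and likewise $\partial_t M$ with $M=\nu_H\ast H$, integration in time over $(0,T)$ yields
\[
\tfrac12\big(\varepsilon\|E(T)\|^2+\mu\|H(T)\|^2\big)+\int_0^T\Re(\partial_tP,\bar E)\,dt+\int_0^T\Re(\partial_tM,\bar H)\,dt=\tfrac12\big(\varepsilon\|E_0\|^2+\mu\|H_0\|^2\big)+\int_0^T\big(\Re(\rho_E,\bar E)+\Re(\rho_H,\bar H)\big)\,dt.
\]

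The heart of the matter is that the two convolution integrals on the left are nonnegative for every $T>0$, which is exactly the time-domain reading of \eqref{passitivityassump}. By causality the restriction of $P$ to $[0,T]$ is unchanged if $E$ is extended by $0$ for $t>T$; since $\bar E$ then vanishes outside $[0,T]$, the ($L^2(\Omega)^3$-valued) Plancherel identity together with $\widehat{\partial_t(\nu_E\ast E)}(\omega)=i\omega\,\mathcal L\nu_E(i\omega)\widehat E(\omega)$ gives $\int_0^T\Re(\partial_t(\nu_E\ast E),\bar E)\,dt=\frac1{2\pi}\int_{\R}\Re\big(i\omega\,\mathcal L\nu_E(i\omega)\big)\|\widehat E(\omega)\|^2\,d\omega\ge0$, and similarly for $\nu_H$; the only subtlety here is that $\nu_E$ need not be integrable, but $\nu_E-\nu_E(\infty)$ is, and the leftover constant part contributes the extra nonnegative term $\nu_E(\infty)\int_0^T\|E\|^2$, as $\nu_E(\infty)=\lim_{\omega\to0}\Re(i\omega\mathcal L\nu_E(i\omega))\ge0$. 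As for the remainder, \eqref{hyponu''} and the Cauchy--Schwarz inequality give $\|\rho_E(t)\|+\|\rho_H(t)\|\lesssim e^{-\delta t}\|U_0\|_\caH$, so that $\int_0^T\big(\Re(\rho_E,\bar E)+\Re(\rho_H,\bar H)\big)\,dt\lesssim\|U_0\|_\caH\big(\sup_{0\le t\le T}(\|E(t)\|^2+\|H(t)\|^2)\big)^{1/2}$.

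Putting these together, with $\Phi(T):=\sup_{0\le t\le T}(\varepsilon\|E(t)\|^2+\mu\|H(t)\|^2)$ and using $\varepsilon\|E_0\|^2+\mu\|H_0\|^2\le\|U_0\|_\caH^2$, the energy identity (applied for all $t\le T$, then maximized over $t$) gives $\Phi(T)\le\|U_0\|_\caH^2+C\|U_0\|_\caH\,\Phi(T)^{1/2}$; viewing this as a quadratic inequality in $\Phi(T)^{1/2}$ bounds $\Phi(T)$ by $C'\|U_0\|_\caH^2$ with $C'$ independent of $T$, hence $\sup_{t\ge0}(\|E(t)\|^2+\|H(t)\|^2)\lesssim\|U_0\|_\caH^2$. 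Feeding this into the history bound of the first paragraph yields $\|T(t)U_0\|_\caH^2\lesssim\|U_0\|_\caH^2$ uniformly in $t\ge0$, and a density argument extends it to all $U_0\in\caH$, which is \eqref{bdsg}. I expect the main obstacle to be exactly the positivity step: recognizing \eqref{passitivityassump} as the statement that the polarization and magnetization currents do nonnegative work over every time window, and making the Plancherel computation legitimate despite the kernels being only bounded and $C^2$ rather than integrable.
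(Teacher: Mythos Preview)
Your proof is correct and follows essentially the same strategy as the paper: reduce to strong solutions, use the characteristics formula for $\eta_E^t,\eta_H^t$, derive an energy identity for $(E,H)$ in which the convolution terms are shown nonnegative via Plancherel and \eqref{passitivityassump}, absorb the source terms coming from the initial histories using the exponential decay of $\nu''$, and finally recover the $L^2_w$-norms of the histories from the explicit formula. Two minor remarks: you should test the equations with $\bar E$ and $\bar H$ (not $\varepsilon\bar E$ and $\mu\bar H$), otherwise the curl terms do not cancel; and where you close with a quadratic inequality in $\Phi(T)^{1/2}$, the paper uses a Gronwall-type lemma instead---both variants work, and your explicit handling of the possibly nonzero $\nu_E(\infty)$ in the Plancherel step is in fact more careful than the paper's.
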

\begin{proof}
Take $U_0=(E_0,H_0, \eta_E^0, \eta_H^0)\in D(\caA)$
and let
$U(t)=(E(t), H(t), \eta^t_E, \eta^t_H)=T(t)U_0$, for all $t\geq 0$.
Then by Theorem \ref{texistence},
  $U\in C([0,\infty),D(\mathcal{A}))\cap C^1([0,\infty),
\caH)$ is
a strong   solution of problem (\ref{pbfirstorder}), which means that
\eqref{transpE}-\eqref{transpH}
and \eqref{Maxintdifequiv} hold for all $t>0$.

But we notice that
\beq\label{etaE}
\eta^t_E(\cdot, s)= \tilde \eta_E^0(\cdot,s-t)+\int_0^{\min\{s,t\}}E(\cdot, t-y)\,dy,\\
\label{etaH}
\eta^t_H(\cdot, s)= \tilde \eta_H^0(\cdot,s-t)+\int_0^{\min\{s,t\}}H(\cdot, t-y)\,dy,
\eeq
where
 $\tilde \eta_E^0$ is the extension of  $\eta_E^0$ by zero on $(-\infty, 0)$.
 Inserting these expressions in  \eqref{Maxintdifequiv}, we find that
 \begin{equation}\label{Maxintdifequivelemination eta} \left\{
\begin{tabular}{lllll}
& $ \varepsilon E_t+\nu_E(0) E+\int_{0}^t \nu'_E(t-s) E(\cdot, s)\,ds-\rot H=F(t)  $ in
$Q$,\vspace{2mm}
\\
& $ \mu H_t+\nu_H(0) H+\int_{0}^\infty  \nu'_H(s) H((\cdot, s)\,ds +\rot E=G(t)  $ in $Q$,\vspace{2mm}
\end{tabular}
\right.
\end{equation}
where
\beqs
F(t):=\int_{0}^\infty  \nu''_E(s) \tilde \eta^0_E(\cdot, s-t)\,ds=\int_{t}^\infty  \nu''_E(s)   \eta^0_E(\cdot, s-t)\,ds,
\\
G(t):=\int_{0}^\infty  \nu''_H(s) \tilde \eta^0_H(\cdot, s-t)\,ds=\int_{t}^\infty  \nu''_H(s)  \eta^0_H(\cdot, s-t)\,ds.
\eeqs
Now we remark that
\[
\nu_E(0) E+\int_{0}^t \nu'_E(t-s) E(\cdot, s)\,ds=\frac{d}{dt}\left(\int_{0}^t \nu_E(t-s) E(\cdot, s)\,ds\right),
\]
and therefore system \eqref{Maxintdifequivelemination eta} is equivalent to
\begin{equation}\label{Maxintdifequivelemination etaequiv} \left\{
\begin{tabular}{lllll}
& $ \varepsilon E_t+\frac{d}{dt}\left(\int_{0}^t \nu_E(t-s) E(\cdot, s)\,ds\right)-\rot H=F(t)  $ in
$Q$,\vspace{2mm}
\\
& $ \mu H_t+\frac{d}{dt}\left(\int_{0}^t \nu_H(t-s) H(\cdot, s)\,ds\right)+\rot E=G(t)  $ in $Q.$\vspace{2mm}
\end{tabular}
\right.
\end{equation}
Now we adapt an argument used in the proof of Theorem 3.1 from \cite{Nguyen-Vinoles:18}.
For a fixed $T>0$, if we multiply the first identity by $\bar E(t)$ and the second one by
$\bar H(t)$, integrate both in $\Omega\times (0,T)$ and take the sum, we find that
\beqs
&&\int_0^T\int_\Omega \left( \left(\varepsilon E_t+\frac{d}{dt}\left(\int_{0}^t \nu_E(t-s) E(\cdot, s)\,ds\right)-\rot H(t)\right)\cdot \bar E(t)\right.
\\\hspace{1cm}&&+
\left.
\left(\mu H_t+\frac{d}{dt}\left(\int_{0}^t \nu_H(t-s) H(\cdot, s)\,ds\right)+\rot E(t)\right)\cdot \bar H(t)
\right)dxdt
\\
\hspace{1cm}&&=\int_0^T\int_\Omega(F(t)\cdot \bar E(t)+ G(t)\cdot \bar  H(t))dxdt.
\eeqs
Taking the real part of this identity and applying  Green's formula  \eqref{green}, we get
\beqs
&&\Re\int_0^T\int_\Omega \left( \left(\varepsilon E_t+\frac{d}{dt}\left(\int_{0}^t \nu_E(t-s) E(\cdot, s)\,ds\right)\right)\cdot \bar E(t)\right.
\\\hspace{1cm}&&+
\left.
\left(\mu H_t+\frac{d}{dt}\left(\int_{0}^t \nu_H(t-s) H(\cdot, s)\,ds\right)\right)\cdot \bar H(t)
\right)dxdt
\\
\hspace{1cm}&&=\Re\int_0^T\int_\Omega(F(t)\cdot \bar E(t)+ G(t) \cdot \bar H(t))dxdt.
\eeqs
Now if we define
$\tilde E_T$ (and similarly for $\tilde H_T$) by
\[
\tilde E_T(\cdot, t)=
\left\{
\begin{tabular}{ll}
$ E(\cdot, t)$ &  if $t\in (0,T),$\\
 0 &   else,
\end{tabular}
\right.
\]
the previous identity can be written as
\beq\label{bdsg:1}
&&\Re\int_0^T\int_\Omega  \left(\varepsilon E_t\cdot  \bar E(t)+\mu H_t\cdot \bar H(t)
\right)dxdt
\\\hspace{1cm}&&
=-\Re\int_\R\int_\Omega \left(  \frac{d}{dt} (\tilde \nu_E\star_t \tilde E_T)(t)\cdot  \overline{\tilde E_T}(t)
+ \frac{d}{dt} (\tilde \nu_H\star_t \tilde H_T)(t)\cdot  \overline{\tilde H_T}(t)\right)dxdt
\nonumber
\\
\hspace{1cm}&&+\Re\int_0^T\int_\Omega(F(t)\cdot \bar E(t)+ G(t) \cdot \bar  H(t))dxdt,
\nonumber
\eeq
where $f \star_t g$ means the convolution in $\R$, namely
\[
(f \star_t g)(t)=\int_{\R} f(t-s) g(s)\,ds, \ \forall t\in \R.
\]
Now by Parseval's identity, we have
\beqs
&&\int_\R\int_\Omega \left(  \frac{d}{dt} (\tilde \nu_E\star_t \tilde E_T)(t)\cdot  \bar E_T(t)
+ \frac{d}{dt} (\tilde \nu_H\star_t \tilde H_T)(t)\cdot  \bar H_T(t)\right)dxdt
\\
\hspace{0.5cm}&&=
\int_\Omega \int_\R i\omega \left(\mathcal{F}((\tilde \nu_E)(i\omega) | (\mathcal{F}((\tilde E_T)(i\omega)|^2
+\mathcal{F}((\tilde \nu_H)(i\omega) | (\mathcal{F}((\tilde H_T)(i\omega)|^2\right)
d\omega dx.
\eeqs
By our passivity assumption   \eqref{passitivityassump}, we deduce that
\[
\Re
\int_\R\int_\Omega \left(  \frac{d}{dt} (\tilde \nu_E\star_t \tilde E_T)(t)\cdot  \bar E_T(t)
+ \frac{d}{dt} (\tilde \nu_H\star_t \tilde H_T)(t)\cdot  \bar H_T(t)\right)dxdt\geq 0.
\]
This estimate in the identity \eqref{bdsg:1} leads to
\beq\label{bdsg:2}
\int_\Omega  \left(\varepsilon |E(x, T)|^2  +\mu |H(x, T)|^2\right) dx
\leq\int_\Omega  \left(\varepsilon E_0(x)|^2  +\mu |H_0(x)|^2\right) dx
\\
+
 2 \Re\int_0^T\int_\Omega(F(t)\cdot \bar E(t)+ G(t)\cdot \bar H(t))dxdt.
\nonumber
\eeq
By setting
\[
\mathcal{E}(t)=\frac{1}{2}\int_\Omega  \left(\varepsilon |E(x, t)|^2  +\mu |H(x, t)|^2\right) dx, \forall t\geq 0,
\]
by using Cauchy-Schwarz's inequality in the last estimate, we obtain
\[
 \mathcal{E}(T)\leq \mathcal{E}(0)+\sqrt{2}\int_0^T \left(\int_\Omega(|F(x,t)|^2+|G(x,t)|^2dx \right)^{\frac{1}{2}} \mathcal{E}(t)^{\frac{1}{2}}  dt, \ \forall T>0.
 \]
By Gronwall's inequality (see for instance \cite[Lemma 3.1]{Nguyen-Vinoles:18}), we deduce that
\be
\label{bdsg:3}
 \mathcal{E}(t)\leq \left(\mathcal{E}(0)^{\frac{1}{2}}+\frac{\sqrt{2}}{2}\int_0^t \left(\int_\Omega(|F(x,s)|^2+|G(x,s)|^2)dx \right)^{\frac{1}{2}}  ds\right)^2, \ \forall t>0.
 \ee

 Now we need to estimate the term
 \[
 \int_0^t \left(\int_\Omega(|F(x,s)|^2+|G(x,s)|^2)dx \right)^{\frac{1}{2}}  ds.
 \]
 But using the definition of $F$ and $G$,  the assumption \eqref{hyponu''} and Cauchy-Schwarz's inequality, we see that
 \beqs
 && \int_0^t \left(\int_\Omega(|F(x,s)|^2+|G(x,s)|^2)dx \right)^{\frac{1}{2}}  ds
 \\&&\hspace{1cm} \lesssim  \left(\int_0^t e^{-\delta s} ds\right) (\|\eta^0_E\|_{L^2_w((0,\infty);J(\Omega))}
  +
  \|\eta^0_H\|_{ L^2_w((0,\infty);\hat J(\Omega))}
  ),
  \eeqs
  and therefore
   \[
  \int_0^t \left(\int_\Omega(|F(x,s)|^2+|G(x,s)|^2)dx \right)^{\frac{1}{2}}  ds
  \lesssim \|\eta^0_E\|_{L^2_w((0,\infty);J(\Omega))}+
  \|\eta^0_H\|_{ L^2_w((0,\infty);\hat J(\Omega))}.
  \]
  Using this estimate in \eqref{bdsg:3} we have obtained that
  \be
\label{bdsg:4}
 \mathcal{E}(t)\lesssim \|U_0\|_{\caH}^2.
 \ee

Now we come back to \eqref{etaE} and \eqref{etaH} to estimate the norm of $\eta^t_E$
and $\eta^t_H$. Let us perform the estimation for $\eta^t_E$.
By \eqref{etaE}, we have
\beqs
\| \eta^t_E(\cdot, s)\|_{L^2_w((0,\infty);J(\Omega))}^2
&\leq& 2\int_0^\infty w(s) \int_\Omega |\tilde \eta_E^0(x,s-t)|^2dx ds
\\
&+&2\int_0^\infty w(s) \int_\Omega \left|\int_0^{\min\{s,t\}}E(x, t-y)\,dy\right|^2 dx ds.
\eeqs
The first term is easily estimated because
a change of variable  and the property
$w(s+t)= e^{-\delta t} w(s)$, valid for all $s,t\geq 0,$ yield
\beqs
\int_0^\infty w(s) \int_\Omega |\tilde \eta_E^0(x,s-t)|^2dx ds
&=&\int_t^\infty w(s) \int_\Omega |\eta_E^0(x,s-t)|^2dx ds
\\
&=&\int_0^\infty w(s+t) \int_\Omega |\eta_E^0(x,s)|^2dx ds
\\
&\leq & e^{-\delta t}\int_0^\infty w(s) \int_\Omega |\eta_E^0(x,s)|^2dx ds.
\eeqs
This means that
\be\label{estimeetermetransport}
\|\tilde \eta_E^0(x,\cdot -t)\|_{L^2_w((0,\infty;  J(\Omega))}\leq e^{-\frac{\delta t}{2}}
\|\eta_E^0\|_{L^2_w((0,\infty;  J(\Omega))}.
\ee
For the second term, by Cauchy-Schwarz's inequality and Fubini's theorem, we have
\beqs
&&\int_0^\infty w(s) \int_\Omega \left|\int_0^{\min\{s,t\}}E(x, t-y)\,dy\right|^2 dx ds
\\
&&\hspace{1cm}\leq
\int_0^\infty w(s) \int_\Omega s   \int_0^{\min\{s,t\}}|E(x, t-y)|^2\,dy dx ds
\\
&&\hspace{1cm}\leq 2\int_0^\infty s w(s)     \int_0^{\min\{s,t\}}
\mathcal{E}(t-y)\,dy  ds.
\eeqs
Hence using the estimate \eqref{bdsg:4}, we find
\beqs
\int_0^\infty w(s) \int_\Omega \left|\int_0^{\min\{s,t\}}E(x, t-y)\,dy\right|^2 dx ds
\lesssim \|U_0\|_{\caH}^2 \int_0^\infty w(s)  s^2 ds
\lesssim \|U_0\|_{\caH}^2.
\eeqs

These last estimates show that
\beqs
\| \eta^t_E(\cdot, s)\|_{L^2_w((0,\infty);J(\Omega))}^2\lesssim \|U_0\|_{\caH}^2.
\eeqs
Since the same arguments yield
\beqs
\| \eta^t_H(\cdot, s)\|_{L^2_w((0,\infty);\hat J(\Omega))}^2\lesssim \|U_0\|_{\caH}^2,
\eeqs
the combination of these two estimates with
\eqref{bdsg:4} leads to
\[
\|U(t)\|_\caH^2\lesssim \|U_0\|_{\caH}^2, \ \forall t>0.
\]
Since $D(\mathcal{A})$ is dense in $\caH$, we conclude that
\[
\|T(t)U_0\|_\caH^2\lesssim \|U_0\|_{\caH}^2, \ \forall t>0,\  U_0\in \caH,
\]
which is the claim.
\end{proof}

\begin{cor}\label{corol:bdsg}
Under the additional assumption \eqref{passitivityassump},  then the resolvent set $ \rho(\mathcal{A})$
of $\mathcal{A}$ contains the right-half plane, namely
\[
\{\lambda\in \mathbb{C}: \Re \lambda>0\} \subset \rho(\mathcal{A}).
\]
\end{cor}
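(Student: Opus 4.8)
The plan is to deduce this directly from the boundedness of the semigroup established in Lemma~\ref{l:bdsg}, via the classical representation of the resolvent of the generator of a bounded $C_0$-semigroup as the Laplace transform of the semigroup on the open right half-plane. Note that Theorem~\ref{texistence} already provides $\lambda\in\rho(\mathcal{A})$ for $\Re\lambda$ large enough; the whole content of the corollary is to extend this to every $\lambda$ with $\Re\lambda>0$, and this is exactly what the uniform bound \eqref{bdsg} delivers.

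Concretely, I would fix $\lambda\in\C$ with $\Re\lambda>0$ and introduce
\[
R_\lambda x:=\int_0^\infty e^{-\lambda t}\,T(t)x\,dt,\qquad x\in\caH .
\]
By \eqref{bdsg} we have $\|e^{-\lambda t}T(t)x\|_\caH\le M e^{-(\Re\lambda)t}\|x\|_\caH$, so this Bochner integral converges absolutely, $R_\lambda$ is a bounded operator on $\caH$, and $\|R_\lambda\|\le M/\Re\lambda$. Then, using the semigroup property in the form $T(h)R_\lambda x=e^{\lambda h}\big(R_\lambda x-\int_0^h e^{-\lambda t}T(t)x\,dt\big)$, dividing by $h$ and letting $h\to0^+$, one finds that $R_\lambda x\in D(\mathcal{A})$ with $(\lambda I-\mathcal{A})R_\lambda x=x$ for all $x\in\caH$; an analogous integration by parts on $D(\mathcal{A})$ (using that the boundary term at $+\infty$ vanishes thanks to \eqref{bdsg}) gives $R_\lambda(\lambda I-\mathcal{A})x=x$ for $x\in D(\mathcal{A})$. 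Hence $\lambda I-\mathcal{A}$ is boundedly invertible, i.e.\ $\lambda\in\rho(\mathcal{A})$, and since $\lambda$ ranges over the whole right half-plane the inclusion follows.

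I do not expect any genuine obstacle: this is a textbook application of semigroup theory, and all the real work has already been carried out in Lemma~\ref{l:bdsg}. Without the uniform bound one would only control the resolvent for $\Re\lambda$ exceeding the growth bound $\omega_0(\mathcal{A})$, and the role of the passivity assumption is precisely to force $\omega_0(\mathcal{A})\le 0$. An equivalent route would be to observe that $\|T(t)\|\le M$ forces $\omega_0(\mathcal{A})\le 0$ and then invoke the general fact $\{\Re\lambda>\omega_0(\mathcal{A})\}\subset\rho(\mathcal{A})$; I would nonetheless prefer the explicit Laplace-transform argument, since it is self-contained and additionally yields the bound $\|(\lambda I-\mathcal{A})^{-1}\|\le M/\Re\lambda$, which is useful input for the decay analysis of Sections~4 and~5.
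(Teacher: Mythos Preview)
Your proposal is correct and is essentially the same approach as the paper's: the paper simply writes ``Direct consequence of Lemma~\ref{l:bdsg} and of Theorem~5.2.1 of \cite{MR1886588}'', invoking the standard fact that the generator of a bounded $C_0$-semigroup has its spectrum in the closed left half-plane, while you unpack that reference by giving the Laplace-transform representation of the resolvent explicitly. The extra resolvent bound $\|(\lambda I-\mathcal{A})^{-1}\|\le M/\Re\lambda$ you obtain is a nice bonus, but otherwise there is no substantive difference.
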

\begin{proof}
Direct consequence of Lemma \ref{l:bdsg}  and of Theorem 5.2.1 of  \cite{MR1886588}.
\end{proof}

\section{Strong stability\label{strongstab}}

One simple way to prove the strong stability of (\ref{pbfirstorder})
 is to use the following
 theorem due to
 Arendt--Batty and Lyubich--V\~u     (see  \cite{arendt:88,Lyubich-Vu:88}).

\bt[Arendt--Batty/Lyubich--V\~u]\label{ab}Let $X$ be a reflexive Banach space and
 $(T(t))_{t\ge 0}$ be a  bounded  $C_0$ semigroup generated by $A$ on $X$.
 Assume that $(T(t))_{t\ge 0}$ is bounded and that no eigenvalues of $A$ lie on the imaginary axis. If $\sigma(A)\cap i\R$ is countable, then $(T(t))_{t\ge 0}$ is stable.
\et

We now want to take advantage of this Theorem. Since the resolvent of our operator  is not compact,
we have to analyze the full spectrum of $\mathcal A$ on the imaginary axis. For that purpose, we actually need a
stronger assumption than the passivity, namely in addition to \eqref{passitivityassump},
we need that

\be\label{passitivityassump+}
 \Re \left(i\omega {\mathcal L}\nu_E(i\omega)\right)+ \Re \left(i\omega  {\mathcal L}\nu_H(i\omega)\right)> 0, \ \forall \omega\in \R^*=\R\setminus\{0\}.
 \ee
 As before  this property is equivalent to
 \be\label{passitivityassumpequiv+}
 \omega  \Im  {\mathcal L}\nu_E(i\omega)+
  \omega  \Im     {\mathcal L}\nu_H(i\omega)< 0, \ \forall \omega\in \R^*.
 \ee

 We first prove a preliminary result related to a family of operators defined in $\caH_1:=J(\Omega)\times \hat J(\Omega)$. First let us consider the unbounded operator
$\mathcal{B}$ from  $\caH_1:=J(\Omega)\times \hat J(\Omega)$ into itself
with domain
\[
D(\mathcal{B}):=\{(E,H)\in \caH_1\vert
\rot E, \rot H \in L^2(\Omega
)^3, \hbox{ and } E\times n= 0  \hbox{ on } \Gamma\},
\]
defined by
\[
\mathcal{B}((E,H))=   (\eps E-\rot H, \mu H+\rot E).
\]
As said before $\mathcal{B}$
is an isomorphism from $D(\mathcal{B})$  into $\caH_1$, with a compact resolvent.
Consequently for any $\omega\in \R$, the operator
\[
\mathcal{B}_\omega((E,H))=   (i\omega \left(\eps + \mathcal{L} \nu_E(i\omega) \right) E-\rot H,
i\omega \left( \mu + \mathcal{L} \nu_H(i\omega)\right) H+\rot E),
\]
with the same domain as $\mathcal{B}$
is a compact perturbation of $\mathcal{B}$. Hence for all $\omega$,
$\mathcal{B}_\omega$ is a Fredholm operator of index 0. Hence it will be an isomorphism if and only if it is injective.
This is proved in the next lemma.
\bl\label{liso}
Under the additional assumptions \eqref{passitivityassump} and \eqref{passitivityassump+},
and if $\Omega$ is simply connected and its boundary connected, then
the operator $\mathcal{B}_\omega$ is   an isomorphism from $D(\mathcal{B})$ into $\caH_1$.
\el
\begin{proof}
Let $(E,H)\in \ker \mathcal{B}_\omega$, then we have
$$
\begin{array}{l}
\displaystyle{
0=(\mathcal{B}_\omega(E,H), (E,H))_{\caH_1}}
\\
\ \ =\displaystyle{
\int_\Omega ((i\omega \left(\eps + \mathcal{L} \nu_E(i\omega) \right) E-\rot H)\cdot \bar E
}\\
\hspace{2,5 cm} \displaystyle{+(i\omega \left( \mu + \mathcal{L} \nu_H(i\omega)\right) H+\rot E)\cdot \bar H)\,dx.}
\end{array}
$$
Hence applying Green's formula and taking the real part of the identity,
we find that
\beqs
0=
\int_\Omega (\Re(i\omega \left(\eps + \mathcal{L} \nu_E(i\omega) \right) |E|^2
+\Re(i\omega \left( \mu + \mathcal{L} \nu_H(i\omega)\right) H)|H|^2)\,dx.
\eeqs
By our  assumptions \eqref{passitivityassump} and \eqref{passitivityassump+}, we may distinguish three cases:
\\
1. If $\Re(i\omega \left(\eps + \mathcal{L} \nu_E(i\omega) \right)>0$, we deduce that
$E=0$
and by the definition of $ \mathcal{B}_\omega$, we deduce that
$\rot H=0$. This property added to the fact that $H\in \hat J(\Omega)$
allows to conclude that $H=0$ owing to Proposition 3.14 of \cite{AmroucheBernardiDaugeGirault98}.
\\
2. If $\Re(i\omega \left(\eps + \mathcal{L} \nu_H(i\omega) \right)>0$, we deduce that
$H=0$
and by the definition of $ \mathcal{B}_\omega$, we deduce that
$\rot E=0$. This property added to the fact that $E$ is divergence free and
satisfies
\[
E\times n= 0  \hbox{ on } \Gamma,
\]
allows to conclude that $E=0$ owing to Proposition 3.18 of \cite{AmroucheBernardiDaugeGirault98}.
\\
3. If $\Re(i\omega \left(\mathcal{L} \nu_E(i\omega)+ \mathcal{L} \nu_H(i\omega) \right)=0$ at $\omega=0$, then we directly deduce that $\rot E=0=\rot H=0$, and we conclude that $E=H=0$ with the help of
Propositions 3.14 and 3.18 of \cite{AmroucheBernardiDaugeGirault98}.
\end{proof}

 \br {\rm
 Obviously the assumption that  $\Omega$ is simply connected and that its boundary is connected
 can be weakened if \eqref{passitivityassump+} can be replaced by a stronger assumption.
 }
 \er

\bl\label{lresolvante}
Under the  assumptions of Lemma \ref{liso},
\[
i \mathbb{R}\equiv \bigr\{i \beta \bigm|\beta \in \mathbb{R}
\bigr\} \subset
\rho ({\mathcal A}).
\]
\el
\begin{proof}
The proof is similar to the proof of the maximality of $\mathcal{A}$.
Indeed fix $\omega\in \R$ and let
  $(F,G,R,S)^\top\in \caH$. Then we look for $U=(E,H,\eta_E, \eta_H)^\top\in D(\mathcal{A})$ such that
\be\label{maxiomega}
(i\omega I-\caA) U=(F,G,R,S)^\top.
\ee
Arguing as in the proof of the maximality, this means that we first look for $(E,H)$  solution of
\eqref{max1ter}-\eqref{max2ter} with $\lambda=i\omega$, or
equivalently solution of
\be\label{maxiomegaEH}
\mathcal{B}_\omega (E,H)=(\eps F+r(i \omega),\mu G+s(i \omega)).
\ee
Note that
$r(i\omega)$ (resp. $s(i\omega)$) belongs to $J(\Omega)$ (resp.
$\hat J(\Omega)$) because by Fubini's theorem and Cauchy-Schwarz's inequality we have
\beqs
\|r(i\omega)\|_\Omega
&\leq&\int_{0}^\infty |\nu''_E(s)| \int_0^s \|R(\cdot,y)\|_\Omega\,dy\,ds
\\
&\lesssim &\int_{0}^\infty w(s) \int_0^s \|R(\cdot,y)\|_\Omega\,dy\,ds
\\
&\lesssim &\int_{0}^\infty \|R(\cdot,y)\|_\Omega    w(y)  \,dy\\
&\lesssim & (\int_{0}^\infty   w(y)  \,dy)^{\frac{1}{2}} \|R\|_{L^2_w(0,\infty); J(\Omega))}.
 \eeqs
By Lemma \ref{liso}, there exists a unique solution $ (E,H)\in D(\mathcal{B})$ to \eqref{maxiomegaEH}.
As before, we obtain $\eta_E$ and $\eta_H$ with the help of
\eqref{max3equiv} and \eqref{max4equiv} respectively (with $\lambda=i\omega$)
and easily check their right requested regularity.
\end{proof}

As a direct consequence of this Lemma and Theorem \ref{ab}, we obtain the following result.

\bl\label{lstrongstab}
Under the  assumptions of Lemma \ref{liso},
 $(T(t))_{t\ge 0}$ is stable, in other words
 \[
 T(t)U_0\to 0 \hbox{ in } \caH, \hbox{ as } t\to \infty, \ \forall U_0\in \caH.
 \]
 In particular the solution $(E(t), H(t))$ of \eqref{Maxintdif}, \eqref{electricbc}
 and  \eqref{IC} satisfies
 \[
 \|E(t)\|_\Omega+\| H(t)\|_\Omega \to 0   \hbox{ as } t\to \infty, \ \forall (E_0, H_0)\in \caH_1.
 \]
\el

\section{Stability results \label{sststab}}

Our stability results are based on
a frequency domain approach, namely for  the exponential  decay of the
energy   we use the following result (see
 \cite{pruss:84} or  \cite{huang:85}):
\begin{lem}\label{lemraoexp}
Let $(e^{t{\mathcal L}})_{t\ge 0}$  be a bounded  $C_0$ semigroup  on a Hilbert
space $H$. Then it is   exponentially stable, i.e., it satisfies
$$||e^{t{\mathcal L}}U_0|| \leq C \, e^{-\omega t} ||U_0||_{H},\quad
 \forall U_0\in H,\quad\forall t\geq 0,$$
for some positive constants $C$ and $\omega$  if and only if
\begin{equation}
i \mathbb{R} \subset
\rho ({\mathcal L}),
 \label{1.8w} \end{equation} and
\begin{equation} \sup_{\beta \in \R}   \,
\|(i\beta -{\mathcal L})^{-1}\| <\infty. \label{1.9l=0}
\end{equation}
\end{lem}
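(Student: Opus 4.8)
The plan is to prove the two implications separately: the forward direction (necessity of \eqref{1.8w}--\eqref{1.9l=0}) is elementary, whereas the converse is the substantial one, and it is there that the Hilbert-space structure is used, through Plancherel's theorem.

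For necessity, assume $\|e^{t\mathcal L}U_0\|\le Ce^{-\omega t}\|U_0\|$ with $\omega>0$. Then the Laplace representation $(\lambda-\mathcal L)^{-1}=\int_0^\infty e^{-\lambda t}e^{t\mathcal L}\,dt$ converges absolutely and is holomorphic on $\{\Re\lambda>-\omega\}$; in particular $i\R\subset\rho(\mathcal L)$ and $\|(i\beta-\mathcal L)^{-1}\|\le C/\omega$ for every $\beta\in\R$.

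For sufficiency, set $M:=\sup_{t\ge0}\|e^{t\mathcal L}\|$ and $K:=\sup_{\beta\in\R}\|(i\beta-\mathcal L)^{-1}\|$. First I would upgrade the hypothesis from the imaginary axis to a vertical strip: boundedness already gives $\{\Re\lambda>0\}\subset\rho(\mathcal L)$ with $\|(\lambda-\mathcal L)^{-1}\|\le M/\Re\lambda$, and a Neumann-series perturbation of $(i\beta-\mathcal L)^{-1}$ on a disc of radius $<1/K$ about each $i\beta$ then yields a finite constant $\widetilde K$ with $\|(\lambda-\mathcal L)^{-1}\|\le\widetilde K$ on $\{0\le\Re\lambda\le1\}$. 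The core step is an $L^2$-in-time bound for the orbits. Fixing $x\in H$ and $\epsilon\in(0,1)$, the function $u(t)=e^{-\epsilon t}e^{t\mathcal L}x$ for $t\ge0$ and $u(t)=0$ for $t<0$ belongs to $L^2(\R;H)$, with Fourier transform $\beta\mapsto(\epsilon+i\beta-\mathcal L)^{-1}x$. Because the naive bound $\|(\epsilon+i\beta-\mathcal L)^{-1}x\|\le\widetilde K\|x\|$ is not integrable in $\beta$, I would instead use the resolvent identity
$$(\epsilon+i\beta-\mathcal L)^{-1}x=(1+i\beta-\mathcal L)^{-1}x+(1-\epsilon)\,(\epsilon+i\beta-\mathcal L)^{-1}(1+i\beta-\mathcal L)^{-1}x,$$
noting that $\beta\mapsto(1+i\beta-\mathcal L)^{-1}x$ is the Fourier transform of the genuinely square-integrable function $t\mapsto e^{-t}e^{t\mathcal L}x\,\mathbf{1}_{t\ge0}$, hence has $L^2(d\beta)$-norm $\lesssim M\|x\|$ by Plancherel; estimating $\|(\epsilon+i\beta-\mathcal L)^{-1}(1+i\beta-\mathcal L)^{-1}x\|\le\widetilde K\|(1+i\beta-\mathcal L)^{-1}x\|$ and applying Plancherel once more gives
$$\int_0^\infty e^{-2\epsilon t}\|e^{t\mathcal L}x\|^2\,dt\lesssim\|x\|^2$$
with a constant independent of $\epsilon$, so letting $\epsilon\downarrow0$ (monotone convergence) gives $\int_0^\infty\|e^{t\mathcal L}x\|^2\,dt\lesssim\|x\|^2$ for all $x\in H$.

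To conclude I would use boundedness once more: for $t>0$, $t\|e^{t\mathcal L}x\|^2=\int_0^t\|e^{(t-s)\mathcal L}e^{s\mathcal L}x\|^2\,ds\le M^2\int_0^t\|e^{s\mathcal L}x\|^2\,ds\lesssim\|x\|^2$, hence $\|e^{t\mathcal L}\|\lesssim t^{-1/2}$; choosing $t_0$ with $\|e^{t_0\mathcal L}\|<1$ and iterating the semigroup property yields the claimed exponential decay. The step I expect to be the main obstacle is precisely the $\epsilon$-uniform $L^2(d\beta)$ estimate on $(\epsilon+i\beta-\mathcal L)^{-1}x$: on the imaginary axis the resolvent is merely bounded, not decaying, and the remedy — comparing it with $(1+i\beta-\mathcal L)^{-1}$, which is square-integrable in $\beta$ exactly because it is the Fourier transform of a damped (hence $L^2$) orbit of a bounded semigroup — is the device that genuinely uses the Hilbert inner product and has no counterpart on a general Banach space.
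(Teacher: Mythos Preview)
The paper does not supply its own proof of this lemma: it is quoted from the literature (Pr\"uss \cite{pruss:84} and Huang \cite{huang:85}) and used as a black box, so there is no in-paper argument to compare against.

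Your proof is correct and is essentially the classical Gearhart--Pr\"uss--Huang argument. The necessity direction is fine. For sufficiency, the crucial device---extending the resolvent bound to a strip by Neumann series, then using the resolvent identity to compare $(\epsilon+i\beta-\mathcal L)^{-1}x$ with the genuinely square-integrable function $\beta\mapsto(1+i\beta-\mathcal L)^{-1}x$ so that Plancherel yields an $\epsilon$-uniform $L^2$-in-time bound on the orbits---is exactly the Hilbert-space mechanism that makes the theorem work. The final passage from $\int_0^\infty\|e^{t\mathcal L}x\|^2\,dt\lesssim\|x\|^2$ to exponential decay via $t\|e^{t\mathcal L}x\|^2\le M^2\int_0^t\|e^{s\mathcal L}x\|^2\,ds$ and iteration of the semigroup law is the standard Datko--Pazy step and is carried out correctly.
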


On the contrary the polynomial decay of the energy  is based on  the following result
stated in Theorem 2.4 of \cite{borichev:10} (see also
\cite{batkai:06,Liu_Rao_05} for weaker variants).
\begin{lem}\label{lemrao}
Let $(e^{t{\mathcal L}})_{t\ge 0}$  be a bounded  $C_0$ semigroup  on a Hilbert
space $H$ such that its generator $\mathcal L$ satisfies \eqref{1.8w} and let $\ell$ be a  fixed positive real number.
Then the following properties are equivalent
\beq
\nonumber
&&||e^{t{\mathcal L}}U_0|| \leq C \, {t^{-\frac{1}{\ell}}} ||U_0||_{{\mathcal D}({\mathcal L})},\quad
 \forall U_0\in\mathcal{D}(\mathcal{L}),\quad\forall t>1,\\
&&||e^{t{\mathcal L}}U_0|| \leq C \, t^{-1} ||U_0||_{{\mathcal
D}({\mathcal L}^\ell)},\quad
 \forall U_0\in\mathcal{D}(\mathcal{L}^\ell),\quad\forall t>1,
 \nonumber
 \\
  &&\sup_{\beta\in \R} \frac{1}{1+|\beta|^\ell} \,
\|(i\beta -{\mathcal L})^{-1}\| <\infty. \label{1.9}
\eeq
\end{lem}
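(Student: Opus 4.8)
Since \eqref{1.9} together with the two decay statements is exactly the Hilbert-space characterization of polynomial stability due to Borichev and Tomilov, the shortest honest route is to invoke \cite[Theorem 2.4]{borichev:10}; for completeness I would sketch how one proves it. The plan is to organize the three equivalences around the resolvent bound \eqref{1.9} as the hub, showing that each of the two decay estimates is equivalent to \eqref{1.9}. The passage between the two decay estimates themselves is then automatic, but can also be obtained directly: from the $t^{-1}$ bound with $\mathcal{D}(\mathcal{L}^{\ell})$-data one recovers the $t^{-1/\ell}$ bound with $\mathcal{D}(\mathcal{L})$-data by applying the moment inequality $\|\mathcal{L}^{-1}y\|\lesssim\|\mathcal{L}^{-\ell}y\|^{1/\ell}\|y\|^{1-1/\ell}$ to $y=e^{t\mathcal{L}}U_0$ and using boundedness of the semigroup, and conversely by the semigroup law $e^{2t\mathcal{L}}=(e^{t\mathcal{L}})^{2}$ together with iteration (fractional powers handled by interpolation).

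For the easy direction ``decay $\Rightarrow$ \eqref{1.9}'', I would use the Laplace representation of the resolvent. Since the semigroup is bounded and $0\in\rho(\mathcal{L})$, for $\Re\lambda>0$ one has $R(\lambda,\mathcal{L})\mathcal{L}^{-\ell}=\int_{0}^{\infty}e^{-\lambda t}e^{t\mathcal{L}}\mathcal{L}^{-\ell}\,dt$; splitting the integral at $t=1$ and using the $O(t^{-1})$ bound on $[1,\infty)$ and mere boundedness on $[0,1]$, then letting $\Re\lambda\downarrow 0$, one controls $\|R(i\beta,\mathcal{L})\mathcal{L}^{-\ell}\|$ uniformly in $\beta$. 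Combining this with the elementary estimate of $\|\mathcal{L}^{\ell}(i\beta-\mathcal{L})^{-1}\|$ (bounded near $\beta=0$ since $i\R\subset\rho(\mathcal{L})$, and $\lesssim|\beta|^{\ell}$ for large $\beta$ via the resolvent identity) yields \eqref{1.9}. This part is routine bookkeeping.

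The real work is the converse, ``\eqref{1.9} $\Rightarrow$ decay'', and this is the only step where I expect a genuine obstacle. I would do it in two stages. First, a contour-shift argument: \eqref{1.9} plus a Neumann series shows that $R(\lambda,\mathcal{L})$ extends holomorphically to a region $\{\Re\lambda>-c(1+|\Im\lambda|)^{-\ell}\}$ with $\|R(\lambda,\mathcal{L})\|\lesssim(1+|\Im\lambda|)^{\ell}$ there; deforming the Bromwich contour in $e^{t\mathcal{L}}\mathcal{L}^{-k}=\frac{1}{2\pi i}\int e^{\lambda t}\lambda^{-k}R(\lambda,\mathcal{L})\,d\lambda$ onto the boundary of this region gives some decay for $k$ somewhat larger than $\ell+1$, but with a loss in the exponent, so this alone does not reach the sharp $t^{-1}$. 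Second, one upgrades to the sharp rate using the Hilbert-space structure via Plancherel's theorem: for $x\in H$ one studies the $H$-valued function $t\mapsto e^{t\mathcal{L}}\mathcal{L}^{-\ell}x$, which after a cutoff/regularization (legitimized by the decay already obtained in the first stage) lies in $L^{2}(\R;H)$ with Fourier transform a multiple of $\beta\mapsto R(i\beta,\mathcal{L})\mathcal{L}^{-\ell}x$; applying Plancherel together with \eqref{1.9} produces an $L^{2}$-in-time bound, which the semigroup property then turns into the pointwise estimate $\|e^{t\mathcal{L}}\mathcal{L}^{-\ell}x\|\lesssim t^{-1}\|x\|$. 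The delicate point throughout is extracting the \emph{sharp, loss-free} power $t^{-1}$ — precisely the content of \cite{borichev:10} — rather than the weaker rate coming from the crude contour estimate.
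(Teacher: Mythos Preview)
Your proposal is correct and matches the paper's approach exactly: the paper does not prove Lemma~\ref{lemrao} at all but simply cites it as Theorem~2.4 of \cite{borichev:10} (with weaker variants in \cite{batkai:06,Liu_Rao_05}). Your sketch of the Borichev--Tomilov argument goes well beyond what the paper provides.
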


As Lemma \ref{lresolvante} guarantees that the assumption (\ref{1.8w}) holds, it remains to check whether
\eqref{1.9l=0} or \eqref{1.9} is valid. This is possible by improving the assumption \eqref{passitivityassump+}
with a precise behavior of $ \Re \left(i\omega {\mathcal L}\nu_E(i\omega)\right)$ and of $\Re \left(i\omega  {\mathcal L}\nu_H(i\omega)\right)$ at infinity. More precisely, we suppose that there exist four non negative constants $\sigma_E$,
$\sigma_H$, $\omega_0$, and $m$ with $\sigma_E+\sigma_H>0$
such that
\beq\label{passitivityassump++}
&& \Re \left(i\omega {\mathcal L}\nu_E(i\omega)\right) |X|^2+ \Re \left(i\omega  {\mathcal L}\nu_H(i\omega)\right)|Y|^2
 \\
&&\hspace{1,5 cm} \geq |\omega|^{-m} (\sigma_E|X|^2+\sigma_H|Y|^2),\  \forall X,Y\in \C^3, \ \omega\in \R: |\omega|\geq \omega_0.
 \nonumber
 \eeq

 \bl\label{lasbehavior resolvent}
In addition to the assumptions of Lemma   \ref{liso}, assume that  \eqref{passitivityassump++} holds. Then
the operator $\mathcal{A}$ satisfies \eqref{1.9} with $\ell=m$.
\el
\begin{proof}
We  use a contradiction argument, namely suppose that
(\ref{1.9})  is false. Then there exist a sequence of
real numbers $\beta_n\rightarrow+\infty$ and a sequence of vectors
$z_n=(E_n,H_n,\eta_{E,n}, \eta_{H,n})^\top$ in $\mathcal{D}(\mathcal{A})$   with
\be
\label{bdsequence}
\left\|z_n\right\|_{\mathcal{H}}=1,
\ee satisfying
\beq
\label{ss1}
&&\beta_n^\ell\Big(\eps i\beta_n E_n+\nu_E(0) E_n
\\
\nonumber&&\hspace{1cm} -\int_{0}^\infty  \nu''_E(s) \eta_{E,n}(\cdot, s)\,ds-\rot H_n\Big)=\eps  F_n\to 0\hbox{ in } J(\Omega), \\
\label{ss2}
&&\beta_n^\ell\Big(\mu i\beta_n H_n+\nu_H(0) E\\
\nonumber&&\hspace{1cm}
-\int_{0}^\infty  \nu''_H(s) \eta_{H,n}(\cdot, s)\,ds+\rot E_n\Big)= \mu  G_n\to 0\hbox{ in } \hat J(\Omega), \\
&&\beta_n^\ell\left(i\beta_n  \eta_{E,n}+\partial_s \eta_{E,n}(\cdot, s)-E_n\right)=  R_n \to 0\hbox{ in } L^2_w((0,\infty);J(\Omega)), \label{ss3}\\
&&\beta_n^\ell\left(i\beta_n \eta_{H,n}+\partial_s\eta_{H,n}(\cdot, s)-H_n\right)= S_n
\to 0\hbox{ in } L^2_w((0,\infty); \hat J(\Omega)). \label{ss4}
\eeq
By these two last identities, $\eta_{E,n}$ and $\eta_{H,n}$ are given by (see \eqref{max3equiv}-\eqref{max4equiv})
\beq
 \eta_{E,n}(s)=\frac{1-e^{- i\beta_n  s}}{ i\beta_n } E_n+\beta_n^{-\ell}\int_0^se^{- i\beta_n  (s-y)} R_n(y)\,dy,\label{ss5}\\
  \eta_{H,n}(s)=\frac{1-e^{- i\beta_n  s}}{ i\beta_n} H_n+\beta_n^{-\ell}\int_0^se^{- i\beta_n  (s-y)} S_n(y)\,dy.\label{ss6}
\eeq
Thus  inserting these expressions
in \eqref{ss1} and  \eqref{ss2}, we find that (compare with \eqref{max1bis}-\eqref{max2bis} and   \eqref{max1ter}-\eqref{max2ter})
\beq
\beta_n^\ell\left( i\beta_n \left(\eps + \mathcal{L} \nu_E(i\beta_n) \right) E_n-\rot H_n\right)=\eps F_n+r_n(i\beta_n),\label{ss1ter}\\
\beta_n^\ell\left(i\beta_n \left( \mu + \mathcal{L} \nu_H(i\beta_n)\right) H_n+\rot E_n\right)= \mu G_n+s_n(i\beta_n),\label{ss2ter}
\eeq
where
\beq\label{defrss}
r_n( i\beta_n )&=&\int_{0}^\infty \nu''_E(s) \int_0^se^{- i\beta_n  (s-y)} R_n(y)\,dy\,ds,\\
s_n( i\beta_n )&=&\int_{0}^\infty \nu''_H(s) \int_0^se^{- i\beta_n  (s-y)} S_n(y)\,dy\,ds,
\label{defsss}
\eeq
that have the regularity $r_n\in J(\Omega)$ and $s_n\in \hat J(\Omega)$ with
\be\label{estnormrnsn}
\|r_n( i\beta_n )\|_\Omega+\|s_n( i\beta_n )\|_\Omega\lesssim
\|R_n\|_{L^2_w((0,\infty);J(\Omega))}+\|S_n\|_{L^2_w((0,\infty);\hat J(\Omega))}=o(1).
\ee

Now multiplying \eqref{ss1ter} (resp. \eqref{ss2ter}) by $\bar E_n$ (resp. $\bar H_n$),
  integrating in $\Omega$, and summing the two identities we get
  \beqs
&&\beta_n^\ell  \int_\Omega
  \left(\left( i\beta_n \left(\eps + \mathcal{L} \nu_E(i\beta_n) \right) E_n-\rot H_n\right)\cdot \bar E_n
  \right.
  \\
&& \hspace{2cm}+ \left.
   \left(i\beta_n \left( \mu + \mathcal{L} \nu_H(i\beta_n)\right) H_n+\rot E_n\right)\cdot \bar H_n
  \right)\,dx
  \\
 && \hspace{2cm}=\int_\Omega\left( (\eps F_n+r_n(i\beta_n))\cdot \bar E_n+
  ( \mu G_n+s_n(i\beta_n))\cdot \bar H_n\right)\,dx.
  \eeqs
Again by Green's formula  \eqref{green}, and taking the real part, we find
 \beqs
&&\beta_n^\ell   \Re \int_\Omega
   \left(  i\beta_n \left(\eps + \mathcal{L} \nu_E(i\beta_n) \right)  |E_n|^2
 +
   i\beta_n \left( \mu + \mathcal{L} \nu_H(i\beta_n)\right) |H_n|^2
  \right)\,dx
  \\
 && \hspace{2cm}= \Re  \int_\Omega\left( (\eps F_n+r_n(i\beta_n))\cdot \bar E_n+
  ( \mu G_n+s_n(i\beta_n))\cdot \bar H_n\right)\,dx.
  \eeqs
 Owing to \eqref{bdsequence}, \eqref{ss1}, \eqref{ss2}, and \eqref{estnormrnsn}, this right-hand side tends to zero as $n$ goes to infinity, in other words, we have
 \be\label{ss10}
 \beta_n^\ell   \Re \int_\Omega
   \left(  i\beta_n \left(\eps + \mathcal{L} \nu_E(i\beta_n) \right)  |E_n|^2
 +
   i\beta_n \left( \mu + \mathcal{L} \nu_H(i\beta_n)\right) |H_n|^2
  \right)\,dx=o(1).
  \ee

 Taking into account our assumption \eqref{passitivityassump++},  for $n$ large enough, the previous property implies that
\[\beta_n^{\ell-m}     \int_\Omega
   \left(\sigma_E  |E_n|^2+\sigma_H |H_n|^2
  \right)\,dx=o(1).
  \]
  Hence taking $\ell=m$, we find that
\be\label{ss11}  \int_\Omega
   \left(\sigma_E  |E_n|^2+\sigma_H |H_n|^2
  \right)\,dx=o(1).
  \ee
  We then distinguish between three cases:
  \\
  1) If $\sigma_E$ and $\sigma_H$ are both positive, then \eqref{ss11} directly guarantees that
\be\label{ss12}
  \|E_n\|_\Omega+  \|H_n\|_\Omega=o(1).
  \ee
  Once this property holds, we come back to  \eqref{ss5} and \eqref{ss6} to get a contradiction with
  \eqref{bdsequence}, since we will show that
 \[
 \| \eta_{E,n} \|_{ L^2_w((0,\infty);  J(\Omega))}+
  \| \eta_{H,n}\|_{ L^2_w((0,\infty); \hat J(\Omega))}=o(1).
 \]
Let us check this property for $\eta_{E,n}$ (the treatment of $\eta_{H,n}$ is fully similar and is left to the reader), namely we will show that
\be\label{ss14}
 \| \eta_{E,n} \|_{ L^2_w((0,\infty);  J(\Omega))}\lesssim
 \| R_n \|_{ L^2_w((0,\infty);  J(\Omega))}+ \|E_n\|_\Omega
 \ee
 which by \eqref{ss3} and \eqref{ss12} leads to
 \[
 \| \eta_{E,n} \|_{ L^2_w((0,\infty);  J(\Omega))}=o(1).
 \]

The first step is to show that $ \eta_{E,n}$ belongs to $L^2_w((0,\infty);  J(\Omega))$. Indeed  the first term of the right-hand side of \eqref{ss5} clearly belongs to  $L^2_w((0,\infty);  J(\Omega))$, so let us concentrate on the second term. Namely let us set
\[
\Psi_n(s,\cdot)=\int_0^se^{- i\beta_n  (s-y)} R_n(y,\cdot)\,dy, \ \forall s
\geq 0.
\]
Then we easily see that $\Psi_n(0,\cdot)=0$
and $\Psi_n$ satisfies the transport equation
\[
\partial_s \Psi_n(s,\cdot)+i\beta_n \Psi_n(s,\cdot)=R_n(s,\cdot), \ \forall s>0.
\]
Hence multiplying this identity by $\bar \Psi_n w(s)$, and integrating in $\Omega$ and in $s\in (0,y)$ for any $y>0$, we find that
\[
\begin{array}{l}
\displaystyle{
\int_\Omega \int_0^y\big(\partial_s \Psi_n(s,x)+i\beta_n \Psi_n(s,x))\cdot\bar \Psi_n(s,x)\big)w(s)\,dsdx}
\\
\hspace{3 cm}
\displaystyle{
=
\int_\Omega \int_0^y R_n(s,x) \cdot\bar \Psi_n(s,x)w(s)\,dsdx.}
\end{array}
\]
Taking the real part of this identity, we find
\[\frac{1}{2}
\int_\Omega \int_0^y \partial_s (|\Psi_n(s,x)|^2)w(s)\,dsdx=
\Re\int_\Omega \int_0^y R_n(s,x) \cdot\bar \Psi_n(s,x) w(s)\,dsdx.
\]
By an integration by parts in this left-hand side, we obtain
\[
\begin{array}{l}
\displaystyle{
\delta \int_\Omega \int_0^y |\Psi_n(s,x)|^2w(s)\,dsdx+ \int_\Omega   |\Psi_n(s,y)|^2w(y)}\\
\hspace{3 cm}=\displaystyle{
2\Re\int_\Omega \int_0^y R_n(s,x) \cdot\bar \Psi_n(s,x) w(s)\,dsdx.}
\end{array}
\]
Hence Cauchy-Scwharz's inequality leads to
\[
\delta \big(\int_\Omega \int_0^y |\Psi_n(s,x)|^2w(s)\,dsdx \big)^{\frac{1}{2}}\leq
2 \big(\int_\Omega \int_0^y |R_n(s,x)|^2 w(s)\,dsdx\big)^{\frac{1}{2}}.
\]
Passing to the limit in $y$ tending to infinity we deduce that
$\Psi_n\in L^2_w((0,\infty);  J(\Omega))$ with
\be\label{ss13}
 \|\Psi_n\|_{ L^2_w((0,\infty);  J(\Omega))}
 \lesssim
 \| R_n \|_{ L^2_w((0,\infty);  J(\Omega))}.
 \ee
Coming back to  \eqref{ss5}, we then have
\beq\label{eq:serge:11:09}
 \| \eta_{E,n}\|_{ L^2_w((0,\infty);  J(\Omega))}
 &\leq& \|\frac{1-e^{- i\beta_n  \cdot }}{ i\beta_n } E_n\|_{ L^2_w((0,\infty);  J(\Omega))}
 \\
& +&\beta_n^{-\ell}\|\Psi_n\|_{ L^2_w((0,\infty);  J(\Omega))}.
\nonumber
 \eeq
 Let us then estimate the
first term of this right-hand side. First
  we notice that
 \beqs
 \|\frac{1-e^{- i\beta_n  \cdot }}{ i\beta_n } E_n\|_{ L^2_w((0,\infty);  J(\Omega))}^2
 &=&(\int_\Omega |E_n(x)|^2\,dx)\left(\int_0^\infty |\frac{1-e^{- i\beta_n  s}}{ i\beta_n }|^2 w(s)\,ds\right)
 \\
 &\leq& \frac{4}{\beta_n^2 } \left(\int_0^\infty w(s)\,ds \right) \|E_n\|_\Omega^2.
 \eeqs

 Hence for $n$ large enough, we have
 \[
  \|\frac{1-e^{- i\beta_n  \cdot }}{ i\beta_n } E_n\|_{ L^2_w((0,\infty);  J(\Omega))}
  \lesssim \|E_n\|_\Omega.
  \]
 Using this estimate and \eqref{ss13} into \eqref{eq:serge:11:09} leads to \eqref{ss14}.
  \\
  2) If $\sigma_E$ is positive, then \eqref{ss11} only yields
\be\label{ss20}
  \|E_n\|_\Omega =o(1).
  \ee
  Hence, to obtain a contradiction,  it remains to show that
\be\label{ss21}
 \|H_n\|_\Omega=o(1).
  \ee
To do so, we first multiply \eqref{ss2ter}  by $\bar H_n$ and integrate in $\Omega$
to get
\[
 \left( \mu + \mathcal{L} \nu_H(i\beta_n)\right) \int_\Omega |H_n|^2\,dx
 +\frac{1}{i\beta_n}\int_\Omega \rot E_n\cdot \bar H_n\,dx=o(1).
\]
Then using Green's formula \eqref{green}, we get
\[
 \left( \mu + \mathcal{L} \nu_H(i\beta_n)\right) \int_\Omega |H_n|^2\,dx
 +\frac{1}{i\beta_n}\int_\Omega  E_n\cdot \rot  \bar H_n\,dx=o(1).
\]
Now we use \eqref{ss1ter} to get
\be\label{ss22}
 \left( \mu + \mathcal{L} \nu_H(i\beta_n)\right) \int_\Omega |H_n|^2\,dx
 -\left(\eps + \overline{\mathcal{L} \nu_E(i\beta_n)} \right)  \int_\Omega  |E_n|^2 \,dx=o(1).
\ee
But we notice that
\eqref{LapT10} guarantees that
\[
| \mathcal{L} \nu_E(i\beta_n)|+| \mathcal{L} \nu_H(i\beta_n)|=o(1).
\]
This property combined with  \eqref{bdsequence} allows to transform \eqref{ss22}
into
\[
  \mu   \int_\Omega |H_n|^2\,dx
 - \eps   \int_\Omega  |E_n|^2 \,dx=o(1).
\]
Therefore \eqref{ss21} holds owing to \eqref{ss20}.
\\
3) If $\sigma_H$ is positive, then \eqref{ss11} only yields \eqref{ss21}
but the previous argument shows that then \eqref{ss20} holds.

The proof is then complete.
\end{proof}

\br
{\rm
The estimate \eqref{ss13} is in accordance with \eqref{estimeetermetransport} because this last one
combined with
Lemma \ref{lemraoexp}
shows that the resolvent of the transport operator is bounded (in the $L^2_w$-norm) in the imaginary axis.
}
\er

This Lemma and Lemma \ref{lemraoexp} (resp. \ref{lemrao}) directly yield the
 \bc\label{coroexpdecay}
In addition to the assumptions of Lemma   \ref{liso}, assume that  \eqref{passitivityassump++} holds with $m=0$. Then the semigroup $(e^{t{\mathcal A}})_{t\ge 0}$ is   exponentially stable, in particular
the solution $(E(t), H(t))$ of \eqref{Maxintdif}, \eqref{electricbc}
 and  \eqref{IC} tends exponentially to zero in $\caH_1$.
\ec

 \bc\label{coroplodecay}
In addition to the assumptions of Lemma   \ref{liso}, assume that  \eqref{passitivityassump++} holds with  $m>0$. Then the semigroup $(e^{t{\mathcal A}})_{t\ge 0}$  is  polynomially stable, i.e.
$$\|e^{t{\mathcal L}}U_0\| \lesssim {t^{-\frac{1}{m}}} \|U_0\|_{{\mathcal D}({\mathcal A})},\quad
 \forall U_0\in\mathcal{D}(\mathcal{A}),\quad\forall t>1.$$
In particular
the solution $(E(t), H(t))$ of \eqref{Maxintdif}, \eqref{electricbc}
 and  \eqref{IC} satisfies
 \[
 \|(E(t), H(t)\|_{\caH_1}\lesssim {t^{-\frac{1}{m}}} \|(E_0, H_0)\|_{{\mathcal D}({\mathcal B})},\quad
 \forall (E_0, H_0)\in\mathcal{D}(\mathcal{B}),\quad\forall t>1.\]
\ec


\section{Some illustrative examples\label{sexamples}}

\subsection{Some dispersive models}
All physical examples of dispersive models
 that we found in the literature (see \cite{Jackson}, \cite{MR1621212}, \cite[\S 11.2]{Sihlova},
 \cite{Cassieretall:17}, \cite{Becacheetall:18}, and \cite{Nguyen-Vinoles:18}) enter in the following example.

 Let $J$ be a positive integer and for all $j\in \{1,\cdots, J\}$, let $p_j, q_j$ be real-valued polynomial (of one variable). Let $z_j$ be a complex number with $\Re z_j=x_j<0$ and define
 \be\label{exo1.1}
 \nu_{E}(t)=\sum_{j=1}^J (p_j(t) \cos(y_j t)+q_j(t) \sin(y_j t))e^{x_j t},
 \ee
 where $y_j=\Im z_j$. Define similarly $\nu_H$ by taking other polynomials $p_j, q_j$  and other complex numbers
 $z_j$ with negative real parts. For simplicity we only examinate the case of $\nu_E$, when it will be necessary we will add the index $E$ or $H$.

 First it is easy to check that $\nu_{E}$ satisfies \eqref{asymptoicbehavior of derivatives}
and \eqref{hyponu''}.
Furthermore by rewritting
$\nu_E$ in the equivalent  form
\be\label{exo1.1equiv}
 \nu_{E}(t)=\sum_{j=1}^J P_j(t) e^{z_j t},
 \ee
 where $P_j$ is a (complex-valued) polynomial of degree $d_j$,
 we see that
 \[
 \mathcal{L}\nu_E(\lambda)=\sum_{j=1}^J \sum_{\ell=0}^{d_j}\frac{P_j^{(\ell)}(0)}{(\lambda-z_j)^{\ell+1}},
 \]
 where $P_j^{(\ell)}$ denotes the derivative of $P_j$ of order $\ell$. This means that $i\omega \mathcal{L}\nu_E(i\omega )$ is a rational fraction in $\omega$, more precisely
 \be\label{exo1.2}
i\omega \mathcal{L}\nu_E(i\omega )=\frac{P_r(\omega)}{Q_r(\omega)}+i \frac{P_i(\omega)}{Q_i(\omega)},
 \ee
where $P_r$, $Q_r$,  $P_i$, $Q_i$ are real-valued polynomials such that
\[
\deg P_r\leq \deg Q_r \hbox{ and } \deg P_i\leq \deg Q_i.
\]
This means that \eqref{passitivityassump} holds
if and only if
\be\label{passitivityassumpexo1}
\frac{P_{E,r}(\omega)}{Q_{E,r}(\omega)}\geq 0
\hbox{ and }
\frac{P_{H,r}(\omega)}{Q_{H,r}(\omega)}\geq 0, \ \forall \omega\in \R.
 \ee
Similarly, \eqref{passitivityassump+} is valid if and only if
$R(\omega)=\frac{P_{E,r}(\omega)}{Q_{E,r}(\omega)}+ \frac{P_{H,r}(\omega)}{Q_{H,r}(\omega)}$ satisfies
 \be\label{passitivityassump+exo1}
R(\omega)> 0, \forall \omega\in \R^*.
 \ee
By writing
 \be\label{exo1.3}
R(\omega)=\frac{\sum_{n=0}^{N_1} a_n \omega^n}{\sum_{n=0}^{N_2} b_n \omega^n},
\ee
with $N_1\leq N_2$,  $a_{N_1}\ne 0$ and $a_{N_2}\ne 0$, we notice that
two necessary conditions for \eqref{passitivityassump+exo1} are
\be\label{eq:serge:17/9}
N_2-N_1 \hbox{ even and } \frac{a_{N_1}}{b_{N_2}}>0.
\ee

Finally,  the last passivity assumption \eqref{passitivityassump++} is obviously related to the behavior at infinity
of
$
R(\omega)$. Using \eqref{exo1.3}, we deduce that
 \eqref{passitivityassump++} holds with $m=N_2-N_1$ if and only if \eqref{eq:serge:17/9} holds.

Let us finish this subsection by some particular cases.

\begin{exo}
The Debye model   \cite[\S 11.2.1]{Sihlova} corresponds to the choice
$\nu_H(t)=0$
and
$\nu_E(t)=\beta e^{-\frac{t}{\tau}}$, with $\beta$ and $\tau$ two positive real numbers.
Hence
\[
 \mathcal{L}\nu_E(\lambda)=\frac{\beta \tau}{\tau \lambda+1},
 \]
 and we find
 \[
 R(\omega)=\frac{\beta\tau^2 \omega^2 }{1+\tau^2 \omega^2}.
 \]
 This means that \eqref{passitivityassump} and \eqref{passitivityassump+}  hold and that
 \eqref{passitivityassump++} is valid with $m=0$. Hence by Corollary \ref{coroexpdecay}
 we deduce  the exponential decay of the energy
 if $\Omega$ is simply connected and its boundary connected
 (see \cite[Theorem 4.12]{nic:scl2012}, where the first  assumption is missing).
\end{exo}

\begin{exo}
The Lorentz model \cite[\S 11.2.2]{Sihlova} corresponds to the choice
$\nu_H(t)=0$
and
\[
\nu_E(t)=\beta \sin(\nu_0 t)  e^{-\frac{ \nu t}{2}},
\]
 with $\beta$, $\nu$ and $\nu_0$ three positive real numbers.
Hence
\[
 \mathcal{L}\nu_E(\lambda)=\frac{\beta \nu_0}{
 \omega_0^2+\lambda^2+\nu \lambda},
 \]
 with $\omega_0^2=\nu_0^2+\nu^2/4$.
Then we easily check that \eqref{passitivityassump} and \eqref{passitivityassump+}  hold and that
 \eqref{passitivityassump++} is valid with $m=2$. Hence by Corollary \ref{coroplodecay}
  we deduce a decay of the energy
 in $t^{-1}$
 if $\Omega$ is simply connected and its boundary connected
 (see \cite[Theorem 4.12]{nic:scl2012}, where the first  assumption is missing).
\end{exo}

\begin{exo}
The  Drude model \cite[\S 11.2.3]{Sihlova} (also called lossy Drude model) corresponds to the choice
$\nu_H(t)=0$
and
\[
\nu_E(t)=\beta (1-e^{- \nu t}),
\]
 with $\beta$ and $\nu$ two positive real numbers.
Hence
\[
 \mathcal{L}\nu_E(\lambda)=\frac{\beta \nu}{
  \nu \lambda+\lambda^2}.
 \]
Then we easily check that \eqref{passitivityassump} and \eqref{passitivityassump+}  hold and that
 \eqref{passitivityassump++} is valid with $m=2$. Again we deduce a decay of the energy
 in $t^{-1}$
 if $\Omega$ is simply connected and its boundary connected.
\end{exo}

The other examples from \cite[\S 11.2]{Sihlova} enter into our framework, we let the details to the reader.

\subsection{A more academic example}

For all $j\in \N^*$,  let $z_j$ be a complex number with $\Re z_j=x_j<0$ and let $a_j, b_j$ be real-valued numbers such that
\[
\sum_{j=1}^\infty (|a_j|+|b_j|)<\infty.
\]
Then we can
 define
 \be\label{exo2.1}
 \nu_{E}(t)=\sum_{j=1}^\infty (a_j \cos(y_j t)+b_j \sin(y_j t))e^{x_j t},
 \ee
 where $y_j=\Im z_j$. For simplicity take $\nu_H=0$.

Assuming that there exists $\xi>0$ such that
\[
x_j\leq -\xi, \ \forall j\in \N^*,
\]
then we directly check that \eqref{asymptoicbehavior of derivatives}
and \eqref{hyponu''} hold.

Furthermore by rewritting
$\nu_E$ in the equivalent  form
\be\label{exo2.1equiv}
 \nu_{E}(t)=\sum_{j=1}^\infty A_j e^{z_j t},
 \ee
 where $A_j$ is a complex number such that
\[
\sum_{j=1}^\infty |A_j|<\infty,
\]
 we see that
 \[
 \mathcal{L}\nu_E(\lambda)=\sum_{j=1}^\infty \frac{A_j}{\lambda-z_j}.
 \]
Now simple calculations show that for all $\omega\in \R^*$, we have
\beqs
\Re(i\omega  \mathcal{L}\nu_E(i\omega)&=&
\omega^2\sum_{j=1}^\infty \frac{\alpha_j}{x_j^2+(\omega-y_j)^2}
\\
&+& \omega\sum_{j=1}^\infty \frac{x_j\beta_j-y_j \alpha_j}{x_j^2+(\omega-y_j)^2},
\eeqs
when $A_j=\alpha_j+i\beta_j$, with $\alpha_j, \beta_j\in \R$.

For the sake of simplicity, we now treat two different particular cases
for which the second term of this right-hand side is zero.
\\
1. Assume that $y_j=\beta_j=0$, for all $j$; then
\[
\Re(i\omega  \mathcal{L}\nu_E(i\omega)=
\omega^2\sum_{j=1}^\infty \frac{\alpha_j}{x_j^2+\omega^2}.
\]
Hence assuming further that
\be\label{eq:serge:11/9:2}
x_j\geq -\Xi, \ \forall j\in \N,
\ee
for some positive real number $\Xi$,
we find that
\[
\xi^2+\omega^2\leq x_j^2+\omega^2\leq \Xi^2+\omega^2, \ \forall j\in \N,
\]
and consequently
\beqs
\Re(i\omega  \mathcal{L}\nu_E(i\omega)&\geq  &
\omega^2  \frac{a}{\Xi^2+\omega^2}+\frac{b}{\xi^2+\omega^2}
  \\&=  &
  \frac{\omega^2\left((a+b)\omega^2+a\xi^2+b\Xi^2\right)}{(\Xi^2+\omega^2)(\xi^2+\omega^2)},
\eeqs
where
\[
a=\sum_{j:\alpha_j>0} \alpha_j, \quad b=\sum_{j:\alpha_j<0} \alpha_j.
\]
This means  that the assumptions
\[
a+b=\sum_{j=1}^\infty \alpha_j>0
\hbox{ and } a\xi^2+b\Xi^2\geq 0
\]
guarantee that  \eqref{passitivityassump++} holds with $m=0$
and hence an exponential decay of the energy (under the same assumptions on $\Omega$
and its boundary as before).
On the contrary, if we assume that
\[
a+b=\sum_{j=1}^\infty \alpha_j= 0
\hbox{ and } a\xi^2+b\Xi^2> 0,
\]
then  \eqref{passitivityassump++} is valid with $m=2$ and again we deduce a decay of the energy
 in $t^{-1}$.
 \\
 2. Assume that    $x_j\beta_j-y_j \alpha_j=0$,  for all $j$; then
 \[
\Re(i\omega  \mathcal{L}\nu_E(i\omega)=
\omega^2\sum_{j:\alpha_j\ne 0} \frac{\alpha_j}{x_j^2+\left(\omega-\frac{x_j\beta_j}{\alpha_j}\right)^2}.
\]
As before assuming further that
\eqref{eq:serge:11/9:2} holds as well as
\[
\frac{\beta_j^2}{\alpha_j^2}\leq \Lambda, \forall j:\alpha_j\ne 0,
\]
for some positive real number $\Lambda$, one can show that there exist four positive constants
$c, C, \theta, \Theta,$ with $c<1<C,$ such that
\[
c(\theta^2+\omega^2)\leq x_j^2+\left(\omega-\frac{x_j\beta_j}{\alpha_j}\right)^2
\leq C( \Theta^2+\omega^2), \ \forall j\in \N: \alpha_j\ne 0,
\]
Therefore
\beqs
\Re(i\omega  \mathcal{L}\nu_E(i\omega)&\geq  &
 \omega^2 \frac{a}{C(\Theta^2+\omega^2)}+\frac{b}{c(\theta^2+\omega^2)}
  \\&\geq  &
  \frac{\omega^2\left((ac+bC)\omega^2+ac\theta^2+bC\Theta^2\right)}{cC(\Xi^2+\omega^2)(\xi^2+\omega^2)}.
\eeqs
Thus the assumptions
\[
ac+bC>0
\hbox{ and } ac\theta^2+bC\Theta^2\geq 0
\]
guarantee  an exponential decay of the energy, while the conditions
\[
ac+bC= 0
\hbox{ and } ac\theta^2+bC\Theta^2> 0,
\]
yield a decay of the energy
 in $t^{-1}$.

 \subsection{Another academic example}

 Take $\nu_H=0$ and
 \[
 \nu_E(t)=e^{-t^2}, \forall t\geq 0.
 \]
 Then we easily check that \eqref{asymptoicbehavior of derivatives}
and \eqref{hyponu''} hold.
Furthermore by Cauchy's theorem, one sees that
\[
i\omega\mathcal{L}(\nu_E)(i\omega)=e^{-\frac{\omega^2}{4}} (i\frac{\sqrt{\pi}}{2}\omega+ |\omega| I_{|\omega|}),\forall \omega\in \R^*,
\]
 as $\int_0^\infty e^{-t^2}\,dt=\frac{\sqrt{\pi}}{2}$ and
 \[
 I_\omega=\int_0^{\omega/2} e^{y^2} dy, \  \forall \omega>0.
 \]
 Hence
 \[
 \Re (i\omega\mathcal{L}(\nu_E)(i\omega))=e^{-\frac{\omega^2}{4}} |\omega| I_{|\omega|},
 \]
 which means that
 \eqref{passitivityassump} and \eqref{passitivityassump+}  hold. On the other hand,
 as
 \[
I_{|\omega|}\to \infty \quad \hbox{ as }\  |\omega|\to \infty,
\]
by
 L'H\^opital's rule, we have
 \[
 \lim_{\omega\to \infty} \frac{I_\omega}{\omega^{-1}e^{\frac{\omega^2}{4}}}=
  \lim_{\omega\to \infty} \frac{1}{\frac{1}{2}-\frac{1}{\omega^2}}=2
  \]
and
we deduce that
\[
  I_{|\omega|}\sim |\omega|^{-1} e^{\frac{\omega^2}{4}}, \
\forall \ |\omega| \hbox{ large .}
\]
Hence
 for $\omega$ large enough, one deduces that
  \[
 \Re (i\omega\mathcal{L}(\nu_E)(i\omega))\gtrsim  1,
 \]
 which means that again
 \eqref{passitivityassump++} is valid with $m=0$ and by Corollary \ref{coroexpdecay}
 we deduce  the exponential decay of the energy.

\appendix
\section{Some properties of the Laplace transform\label{appendix}}

In this section, we state some results for the Laplace transform of kernels in $K$

\begin{lem} Let $\nu\in K$, then we have
\be\label{lapT2}
\nu'(t)=-\int_t^\infty \nu''(y)\,dy.
\ee
\end{lem}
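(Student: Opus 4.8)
The plan is to integrate the identity $\nu''=(\nu')'$ on a finite interval and then pass to the limit, using the two defining properties of the class $K$ to control the error term and the boundary term respectively.

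First I would fix $t\ge 0$ and, for any $T>t$, apply the fundamental theorem of calculus to $\nu'\in C^1([0,\infty))$ to obtain
\[
\nu'(T)-\nu'(t)=\int_t^T \nu''(y)\,dy .
\]
Next I would observe that, by assumption \eqref{hyponu''}, the integrand satisfies $|\nu''(y)|\le w(y)=Ce^{-\delta y}$ with $\delta>0$, so $\nu''\in L^1((t,\infty))$ and the improper integral $\int_t^\infty \nu''(y)\,dy$ converges absolutely; in particular $\int_t^T\nu''(y)\,dy\to\int_t^\infty\nu''(y)\,dy$ as $T\to\infty$. On the other hand, assumption \eqref{asymptoicbehavior of derivatives} gives $\nu'(T)\to 0$ as $T\to\infty$. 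Letting $T\to\infty$ in the displayed identity therefore yields $-\nu'(t)=\int_t^\infty \nu''(y)\,dy$, which is exactly \eqref{lapT2}.

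There is no real obstacle here: the only points requiring the hypotheses are the convergence of the tail integral (which uses the exponential bound \eqref{hyponu''}) and the vanishing of the boundary term at infinity (which uses \eqref{asymptoicbehavior of derivatives}); everything else is the fundamental theorem of calculus together with dominated convergence. I would simply write out the two lines above.
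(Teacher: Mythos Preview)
Your proof is correct and follows essentially the same idea as the paper's: both use the fundamental theorem of calculus together with \eqref{hyponu''} for integrability of $\nu''$ and \eqref{asymptoicbehavior of derivatives} to kill the boundary term at infinity. The only cosmetic difference is that the paper first records the special case $t=0$ as the identity $\nu'(0)+\int_0^\infty\nu''(y)\,dy=0$ and then combines it with $\nu'(t)=\nu'(0)+\int_0^t\nu''(y)\,dy$, whereas you work directly on $[t,T]$ and let $T\to\infty$; your route is slightly more direct but the content is the same.
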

\begin{proof}
First we notice that \eqref{asymptoicbehavior of derivatives} is equivalent
to the identity
\be\label{lapT1}
\nu'(0)+\int_0^\infty \nu''(y)\,dy=0,
\ee
simply because Lebesque's bounded convergence theorem guarantees that
\[
\int_0^\infty \nu''(y)\,dy=\lim_{R\to \infty}\int_0^R \nu''(y)\,dy.
\]
As $\nu$ is twice differentiable, we may write
\[
\nu'(t)=\nu'(0)+\int_0^t \nu''(y)\,dy
\]
and by \eqref{lapT1}, we get  \eqref{lapT2}.
\end{proof}

\bc
Let $\nu\in K$, then we have
\be\label{lapT3}
|\nu'(t)|\lesssim e^{-\delta t}, \ \forall t\geq 0,
\ee
as well as
\be\label{lapT4}
|\nu(t)|\lesssim 1+ e^{-\delta t}, \ \forall t\geq 0.
\ee
\ec
\begin{proof}
The estimate  \eqref{lapT3} directly follows from
 \eqref{lapT2} and the assumption \eqref{hyponu''}.

 For the second estimate we again may write
 \[
\nu(t)=\nu(0)+\int_0^t \nu'(y)\,dy,
\]
and we conclude by \eqref{lapT3}.
\end{proof}

The previous results allow to give a meaning of the Fourier-Laplace transform of $\nu\in K$
defined by
\be\label{lapTransform}
\mathcal{L} \nu(\lambda)=
 \int_{0}^\infty e^{-\lambda s} \nu(s)
\,ds,
\ee
for all $\lambda\in \C$ such that $\Re\lambda>0$.
Furthermore, the following identities will be valid
\beq\label{lapT5}
\lambda \mathcal{L} \nu(\lambda)=\nu(0)+  \mathcal{L} \nu'(\lambda),
\\
\label{lapT6}
\lambda \mathcal{L} \nu(\lambda)=\nu(0)+\frac{1}{\lambda}(\nu'(0)+\mathcal{L} \nu''(\lambda)),
\eeq
for all $\lambda\in \C$ such that $\Re\lambda>0$.

As the estimate \eqref{lapT3} guarantees that $\nu'$  is integrable, by Lebesgue's bounded convergence theorem we deduce that
\[
\mathcal{L} \nu'(\lambda)\to 0  \hbox{ as } \Re \lambda\to \infty;
\]
by \eqref{lapT5}, we then deduce that
\be\label{lapT7}
\mathcal{L} \nu(\lambda)\to 0   \hbox{ as } \Re \lambda\to \infty.
\ee

Finally, since for $\nu\in K$ its derivative is exponentially decaying at infinity (see \eqref{lapT3}), the
Fourier-Laplace transform of $\nu'$ is also well-defined on the imaginary axis
and the mapping
\[
\R\to \C: \omega\to  \mathcal{L} \nu'(i \omega)
\]
is continous and bounded. In view to \eqref{lapT5}, we then have (in the distributional sense)
\be\label{lapT8}
i \omega \mathcal{L} \nu(i \omega)=\nu(0)+  \mathcal{L} \nu'(i \omega), \forall \omega\in \R,
\ee
and consequently the mapping
\be\label{LapT10}
\omega\to i \omega  \mathcal{L} \nu(i\omega)
\hbox{ is continuous on $\R$ and bounded. }
\ee

Note also that for $\nu\in K$, and any $\omega\in \R$,  ${\mathcal L}\nu'(i\omega)$ corresponds to the Fourier transform of
 $\widetilde{\nu'}$, the extension by zero of $\nu'$ in $(-\infty, 0)$, as
 \[
\mathcal{L} \nu'(i\omega)=
 \int_{0}^\infty e^{-i\omega s} \nu'(s)
\,ds= \int_{-\infty}^\infty e^{-i\omega s} \widetilde{\nu'}(s)
\,ds.
\]

        \protect\bibliographystyle{abbrv}
    \protect\bibliographystyle{alpha}
    \bibliography{hyperbolic_delay,/Users/sergenicaise/Documents/Serge/Desktop/Biblio/control,/Users/sergenicaise/Documents/Serge/Desktop/Biblio/valein,/Users/sergenicaise/Documents/Serge/Desktop/Biblio/kunert,/Users/sergenicaise/Documents/Serge/Desktop/Biblio/est,/Users/sergenicaise/Documents/Serge/Desktop/Biblio/femaj,/Users/sergenicaise/Documents/Serge/Desktop/Biblio/mgnet,/Users/sergenicaise/Documents/Serge/Desktop/Biblio/dg,/Users/sergenicaise/Documents/Serge/Desktop/Biblio/bib,/Users/sergenicaise/Documents/Serge/Desktop/Biblio/maxwell,/Users/sergenicaise/Documents/Serge/Desktop/Biblio/bibmix,/Users/sergenicaise/Documents/Serge/Desktop/Biblio/cochez,/Users/sergenicaise/Documents/Serge/Desktop/Biblio/soualem,/Users/sergenicaise/Documents/Serge/Desktop/Biblio/nic}

\end{document}